
\documentclass[12pt]{amsart}
\usepackage{amsmath,amsthm,amsfonts,amssymb}

\newtheorem{metatheorem}{metatheorem}[section]
\newtheorem{theorem}[metatheorem]{Theorem}

\newtheorem{corollary}[metatheorem]{Corollary}

\newtheorem{itexample}[metatheorem]{Example}
\newtheorem{itdefinition}[metatheorem]{Definition}
\newtheorem{itremark}[metatheorem]{Remark}
\newtheorem{itquestion}[metatheorem]{Question}
\newtheorem{itproblem}[metatheorem]{Problem}
\newtheorem{italgorithm}[metatheorem]{Algorithm}

\newenvironment{example}%
    {\begin{itexample}\begin{rm}}{\end{rm}\end{itexample}}
\newenvironment{definition}%
    {\begin{itdefinition}\begin{rm}}{\end{rm}\end{itdefinition}}
\newenvironment{remark}%
    {\begin{itremark}\begin{rm}}{\end{rm}\end{itremark}}
    {\begin{itquestion}\begin{rm}}{\end{rm}\end{itquestion}}
    {\begin{itproblem}\begin{rm}}{\end{rm}\end{itproblem}}
    {\begin{italgorithm}\begin{rm}}{\end{rm}\end{italgorithm}}

\newcommand{\Xd}{(X, d)}

\newcommand{\M}{\mathcal{M}}

\newcommand{\Mzero}{\M_0}
\newcommand{\Mone}{\M_1}
\newcommand{\Moneplus}{\M_1^+}
\newcommand{\Mplus}{\M^+}

\newcommand{\MX}{\M(X)}
\newcommand{\MzeroX}{\M_0(X)}
\newcommand{\MoneX}{\M_1(X)}

\newcommand{\Ezero}{E_0}

\newcommand{\Imu}{I(\mu)}

\newcommand{\Imunu}{I(\mu, \nu)}

\newcommand{\N}{\mathbb{N}}
\newcommand{\R}{\mathbb{R}}

\newcommand{\p}{\mathbb{P}}

\newcommand{\mbar}{M}

\newcommand{\ip}[2]{( #1 \mid #2 )}

\newcommand{\Bigip}[2]{\Bigl( #1 \Bigm| #2 \Bigr)}
\newcommand{\biggip}[2]{\biggl( #1 \biggm| #2 \biggr)}

\newcommand{\wstar}{\mbox{weak-$*$}}

\newcommand{\ts}{\textstyle}

\allowdisplaybreaks

\newcommand{\nwrefA}[1]{\ref{#1} of~\cite{NW1}}
\newcommand{\nwrefB}[1]{\ref{#1} of~\cite{NW2}}

\newlabel{2.10}{{5.3}{17}}
\newlabel{2.12}{{5.7}{20}}
\newlabel{2.19}{{3.6}{12}}
\newlabel{2.20}{{5.8}{21}}
\newlabel{2.6}{{3.1}{9}}
\newlabel{2.7}{{5.1}{16}}
\newlabel{2.9.5}{{5.2}{17}}
\newlabel{cont2}{{2.7}{8}}
\newlabel{contoncompact}{{2.6}{8}}
\newlabel{newlemma}{{2.2}{5}}
\newlabel{qhmconds}{{3.2}{10}}
\newlabel{sect:qhm}{{3}{9}}

\newlabel{2.1.4}{{3.6}{5}}
\newlabel{2.13}{{5.3}{8}}
\newlabel{2.1}{{3.5}{4}}
\newlabel{2.2}{{3.1}{3}}
\newlabel{2.3}{{3.2}{3}}
\newlabel{2.4}{{3.3}{4}}
\newlabel{2.9A}{{3.8}{5}}
\newlabel{2.9}{{5.4}{8}}
\newlabel{2.9.5new}{{5.2}{8}}
\newlabel{4ptexample}{{5.7}{9}}
\newlabel{maxinvmeasures}{{3}{3}}
\newlabel{theoremDcorollary}{{4.11}{8}}
\newlabel{theoremDnew}{{4.10}{7}}
\newlabel{theoremD}{{4.9}{7}}

\begin{document}

\title
{Distance Geometry in Quasihypermetric Spaces.~III}

\author{Peter Nickolas}
\address{School of Mathematics and Applied Statistics,
University of Wollongong, Wollongong, NSW 2522, Australia}
\email{peter\_\hspace{0.8pt}nickolas@uow.edu.au}
\author{Reinhard Wolf}
\address{Institut f\"ur Mathematik, Universit\"at Salz\-burg,
Hellbrunnerstrasse~34, A-5020 Salz\-burg, Austria}
\email{Reinhard.Wolf@sbg.ac.at}

\keywords{Compact metric space, finite metric space, quasihypermetric space, metric embedding,
signed measure, signed measure of mass zero, spaces of measures,
distance geometry, geometric constant}

\subjclass[2000]{Primary 51K05; secondary 54E45, 31C45}

\date{}

\begin{abstract}
Let $\Xd$ be a compact metric space and let
$\MX$ denote the space of all finite signed Borel measures on~$X$.
Define $I \colon \MX \to \R$ by
\[
\Imu = \int_X \! \int_X d(x,y) \, d\mu(x) d\mu(y),
\]
and set
$
\mbar(X) = \sup \Imu,
$
where $\mu$ ranges over
the collection of signed measures in $\MX$ of total mass~$1$.
This paper, with two earlier papers [Peter Nickolas and Reinhard Wolf,
\emph{Distance geometry in quasihypermetric spaces.\ I} and~\emph{II}],
investigates the geometric constant~$\mbar(X)$
and its relationship to the metric properties of~$X$ and the
functional-analytic properties of a certain subspace of $\MX$
when equipped with a natural semi-inner product.
Specifically, this paper explores links between the properties
of~$\mbar(X)$ and metric embeddings of~$X$, and the properties
of~$\mbar(X)$ when $X$ is a finite metric space.
\end{abstract}

\maketitle

\section{Introduction}
\label{Introduction3}

Let $\Xd$ be a compact metric space and let
$\MX$ denote the space of all finite signed Borel measures on~$X$.
Let $I \colon \MX \to \R$ be defined by
\[
\Imu = \int_X \! \int_X d(x,y) \, d\mu(x) d\mu(y),
\]
and set
\[
\mbar(X) = \sup \Imu,
\]
where $\mu$ ranges over $\MoneX$,
the collection of signed measures in $\MX$ of total mass~$1$.

Our interest in this paper and its predecessors \cite{NW1}
and~\cite{NW2} is in the properties
of the geometric constant $\mbar(X)$.
In~\cite{NW1}, we observed that if $\Xd$ does not have
the quasihypermetric property, then $\mbar(X)$ is infinite,
and thus the context of our study for the most part
is that of quasihypermetric spaces.
Recall (see \cite{NW1}) that $\Xd$ is quasihypermetric
if for all $n \in \N$, all $\alpha_1, \ldots, \alpha_n \in \R$
satisfying
$\sum_{i=1}^n \alpha_i = 0$, and all $x_1, \ldots, x_n \in X$,
we have
\[
\sum_{i,j=1}^n \alpha_i \alpha_j d(x_i, x_j) \leq 0.
\]

In the presence of the quasihypermetric property,
a natural semi-inner product space structure becomes available
on $\MzeroX$, the subspace of $\MX$ consisting of
all signed measures of mass~$0$. Specifically,
for $\mu, \nu \in \MzeroX$, we define
\[
(\mu \mid \nu) = -\Imunu,
\]
and denote the resulting semi-inner product space by $\Ezero(X)$.
The associated seminorm $\|\cdot\|$ on $\Ezero(X)$
is then given by
\[
\| \mu \| = \bigl[ -\Imu \bigr]^\frac{1}{2}.
\]

The semi-inner product space $\Ezero(X)$ is in many ways the key
to our analysis of the constant $\mbar(X)$.
In~\cite{NW1}, we developed the properties of
$\Ezero(X)$ in a detailed way, exploring in particular
the properties of several operators and functionals associated
with $\Ezero(X)$, some questions related to its topology,
and the question of completeness. Questions directly relating
to the constant $\mbar(X)$ were only examined in~\cite{NW1}
when they had a direct bearing on this general analysis.
In~\cite{NW2}, we discussed maximal measures (measures which attain
the supremum defining $\mbar(X)$), sequences of measures
which approximate the supremum when no maximal measure exists
and conditions implying or equivalent to the finiteness of $\mbar(X)$.

In this paper, building on the above work, we discuss
\begin{enumerate}
\item[(1)]
metric embeddings of $X$, both of a explicitly geometric type
and of a more abstract functional-analytic type, and
\item[(2)]
the properties of $\mbar(X)$ when $X$ is a finite metric space.
\end{enumerate}

We assume here that the reader has read \cite{NW1} and~\cite{NW2},
and we repeat their definitions and results here only as necessary.

\section{Definitions and Notation}
\label{chapter2b}

Let $\Xd$ (abbreviated when possible to~$X$) be a compact metric space. 
The diameter of $X$ is denoted by $D(X)$.
We denote by $C(X)$ the Banach space
of all real-valued continuous functions
on~$X$ equipped with the usual sup-norm.
Further,
\begin{itemize}
\item
$\MX$ denotes the space of all finite signed Borel measures on~$X$,
\item
$\Mzero(X)$ denotes the subspace of $\MX$
consisting of all measures of total mass~$0$,
\item
$\Mone(X)$ denotes the affine subspace of $\MX$
consisting of all measures of total mass~$1$,
\item
$\Mplus(X)$ denotes the set of all positive measures in~$\MX$, and
\item
$\Moneplus(X)$ denotes the intersection of $\Mplus(X)$ and $\Mone(X)$,
the set of all probability measures on~$X$.
\end{itemize}
For $x \in X$, the atomic measure at~$x$ is denoted by~$\delta_x$.

The following two functionals on measures play a central role in our work.
If $\Xd$ is a compact metric space, then for $\mu, \nu \in \MX$, we set
\[
\Imunu = \int_X \! \int_X d(x, y) \, d\mu(x) d\nu(y),
\]
and then
\[
\Imu = I(\mu, \mu).
\]
Also, a linear functional $J(\mu)$ on $\MX$ is
defined for each $\mu \in \MX$ by $J(\mu)(\nu) = \Imunu$
for all $\nu \in \MX$.
For $\mu \in \MX$, the function $d_\mu \in C(X)$ is defined by
\[
d_\mu(x) = \int_X d(x, y) \, d\mu(y)
\]
for $x \in X$.

For the compact metric space $\Xd$, we define
\[
\mbar(X) = \sup \bigl\{ \Imu: \mu \in \Mone(X) \bigr\}.
\]

\section{Metric Embeddings of Finite Spaces}
\label{embeddingsI}

Metric embeddings of various types have played a significant role
in work on the geometric properties of metric spaces.
In section~\ref{sect:qhm} of~\cite{NW1},
for example, we discussed briefly some connections between
the quasihypermetric property and $L^1$-embeddability and between
the quasihypermetric property and the metric embedding ideas of
Schoenberg~\cite{Sch3}. Also, embedding arguments based around and
extending Schoenberg's ideas were used in~\cite{AandS} by
Alexander and Stolarsky to obtain information on $M(X)$
when $X$ is a subset of euclidean space, and in~\cite{Ass1}  
by Assouad to characterize the hypermetric property
in finite metric spaces (see section~\ref{finitespaces} below
for the definition of the hypermetric property). 

In this and the following section, we apply metric embedding arguments
to the analysis of the constant~$\mbar$. In this section,
our arguments are for finite spaces,
and are of a more or less explicitly geometric character,
while in the following section, we use embedding arguments of
a functional-analytic character, and the results are for the case
of a general (usually compact) metric space.

As mentioned in section~\nwrefA{sect:qhm}, Schoenberg~\cite{Sch3} proved
that a separable metric space $\Xd$ is quasihypermetric if and only if
the metric space $(X, d^\frac{1}{2})$ can be embedded isometrically
in the Hilbert space~$\ell^2$. In particular, if $X$ is a finite space,
then $\Xd$ is quasihypermetric if and only if $(X, d^\frac{1}{2})$
can be embedded isometrically in a euclidean space
of suitable dimension. We will refer to an embedding
of $(X, d^\frac{1}{2})$ in a euclidean space or in Hilbert space
as a \textit{Schoenberg-embedding} or, for short,
an \textit{S-embedding} of~$X$.

Our results in this section relate the metric properties
of a space~$X$ which are our main interest
to the geometric properties of the S-embeddings of~$X$
and to the existence of invariant measures on~$X$
(see section~\nwrefB{maxinvmeasures}) of total mass~$1$.

First we have the following result, for the proof of which we make use
of some ideas developed by Assouad~\cite{Ass1}.

\begin{theorem}
\label{sphere100}
Let $\Xd$ be a finite metric space. If $\mbar(X) < \infty$,
then every S-embedding of~$X$ in a euclidean space lies on
\textup{(}the surface of\textup{)} a sphere. 
\end{theorem}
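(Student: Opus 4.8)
The plan is to fix an arbitrary S-embedding $x_i \mapsto p_i$ of the finite space $X = \{x_1, \dots, x_n\}$ into a euclidean space $\R^k$ (so that $\|p_i - p_j\|^2 = d(x_i, x_j)$, the embedding being of $(X, d^{\frac{1}{2}})$), and to prove the contrapositive: if the points $p_i$ do not lie on a common sphere, then $\mbar(X) = \infty$. Since lying on a sphere is intrinsic to the affine hull of the $p_i$ --- the intersection of a sphere with an affine subspace is again a sphere, and a sphere of the affine hull extends to one of $\R^k$ with the same center and radius --- I would first reduce to the case in which the $p_i$ affinely span $\R^k$.

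The computational heart is an identity for the energy of a mass-one signed measure. For $\mu = \sum_i \mu_i \delta_{x_i}$ with $\sum_i \mu_i = 1$, substituting $d(x_i, x_j) = \|p_i\|^2 + \|p_j\|^2 - 2 \langle p_i, p_j \rangle$ into $\Imu = \sum_{i,j} \mu_i \mu_j\, d(x_i, x_j)$ and collapsing the three resulting sums using $\sum_i \mu_i = 1$ yields
\[
\Imu = 2 \sum_i \mu_i \|p_i\|^2 - 2 \Bigl\| \sum_i \mu_i p_i \Bigr\|^2 = 2S - 2\|P\|^2 ,
\]
where $P = \sum_i \mu_i p_i$ and $S = \sum_i \mu_i \|p_i\|^2$. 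As a sanity check, in the on-sphere case $\|p_i\|^2 = \langle w, p_i \rangle + b$ this collapses to $\Imu = -2\|P - w/2\|^2 + \text{const}$, which is bounded above, consistent with $\mbar(X) < \infty$.

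Next I would reinterpret the sphere condition through the paraboloid lift $\hat p_i = (p_i, \|p_i\|^2) \in \R^{k+1}$: the points $p_i$ lie on a sphere precisely when $\|p_i\|^2$ agrees with an affine function of $p_i$, that is, when the lifted points $\hat p_i$ all lie on a single hyperplane of the form $\{(x, z) : z = \langle w, x \rangle + b\}$. Under the full-affine-span reduction, such a hyperplane exists if and only if the $\hat p_i$ fail to affinely span $\R^{k+1}$. Thus, assuming the $p_i$ are not on a sphere, the affine map $\mu \mapsto (P, S) = \sum_i \mu_i \hat p_i$, defined on the mass-one hyperplane $\{\sum_i \mu_i = 1\}$, has image equal to the affine hull of the $\hat p_i$, namely all of $\R^{k+1}$. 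Choosing, for each $t \in \R$, a mass-one measure $\mu$ with $(P, S) = (0, t)$ then gives $\Imu = 2t$, which is unbounded above; hence $\mbar(X) = \infty$, as required.

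I expect the substantive step to be the recognition that finiteness of $\mbar(X)$ is controlled by the nondegeneracy of the paraboloid lift of the embedding; once the identity $\Imu = 2S - 2\|P\|^2$ is available, the remainder is the linear-algebraic fact that an affine surjection onto $\R^{k+1}$ permits $S$ to tend to $+\infty$ while $P$ is held fixed. The points requiring care are the reduction to the affine hull and the precise equivalence ``on a sphere $\iff$ lifted points do not affinely span $\R^{k+1}$''; positivity of the radius is not an issue, since $\|p_i - c\|^2$ is automatically a nonnegative squared norm.
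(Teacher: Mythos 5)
Your argument is correct, and at its core it runs on the same identity as the paper's proof: your $\Imu = 2S - 2\|P\|^2$ is exactly the paper's relation $u(s) = \tfrac{1}{2}\sum_{i,j} s_i s_j \|y_i - y_j\|^2 + \sum_k v_k(s)^2$ rearranged, and in both cases the finishing blow is that a linear (affine) quantity bounded above on a subspace must be constant there. The packaging, however, is genuinely different. The paper follows Assouad: it encodes the existence of a centre $z$ as the solvability of a linear system on the mass-zero hyperplane $T$, reduces this to the kernel-containment condition $\bigcap_k \ker v_k \subseteq \ker u$, and verifies that condition by bounding $u$ on $\bigcap_k \ker v_k$ by $\tfrac{1}{2}\mbar(X)$. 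You instead argue the contrapositive via the paraboloid lift $p_i \mapsto (p_i, \|p_i\|^2)$: after the (legitimate, and necessary) reduction to the case where the $p_i$ affinely span, ``no sphere'' becomes ``the lifted points affinely span $\R^{k+1}$,'' whence the affine map $\mu \mapsto (P, S)$ is onto and one can drive $S \to \infty$ with $P = 0$, exhibiting mass-one measures with $\Imu = 2S$ unbounded. The two conditions are dual to one another (non-surjectivity of $t \mapsto (v_1(t), \ldots, v_m(t), u(t))$ is precisely a linear dependence expressing $u$ in terms of the $v_k$), so the mathematical content is the same; what your version buys is a more transparent geometric picture and an explicit witness to $\mbar(X) = \infty$ in the contrapositive, while the paper's version, by constructing the centre as the solution of a linear system, sets up the machinery (the functionals $u$, $v_k$ and the set $Y$ on the sphere) that is reused quantitatively in Theorem~\ref{sphere200}. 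The two points you flag as needing care --- the restriction to the affine hull and the exclusion of vertical hyperplanes through the lifted points --- are exactly the right ones, and both are handled correctly by your affine-span reduction.
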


\begin{proof}
Suppose that $X = \{x_1, \ldots, x_n\}$ and that the S-embedding of~$X$
into the euclidean space $E = \R^m$ maps $x_i$ to $y_i \in E$
for $i = 1, \ldots, n$. We are seeking $z \in E$ such that
$\|y_i - z\|^2 = \|y_j - z\|^2$ for all $i, j$, and it is easy to see
that this relation holds for $z \in E$ if and only if
$\|y_1\|^2 - \|y_i\|^2 = 2 \ip{y_1-y_i}{z}$ for all~$i$.
Further, if we let $T$ denote the hyperplane
$\{(t_1, \ldots, t_n) : \sum_{i=1}^n t_i = 0\}$
in~$\R^n$, we see that the last relation holds if and only if
\[
\sum_{i=1}^n t_i \|y_i\|^2 = 2 \biggip{\sum_{i=1}^n t_i y_i}{z}
\]
for all $(t_i) \in T$. Let $\{e_k : k = 1, \ldots, m\}$
be an orthonormal basis for~$E$,
and define functionals $u$ and~$v_k$ for $k = 1, \ldots, m$
on the hyperplane~$T$ by setting
\[
u(t) = \sum_{i=1}^n t_i \|y_i\|^2
\quad\mbox{and}\quad
v_k(t) = \biggip{\sum_{i=1}^n t_i y_i}{e_k}
\]
for $t \equiv (t_i) \in T$. Then it is clear that there exists
$z \in E$ for which the condition above
holds if and only if there exist scalars $\{\alpha_k : k = 1, \ldots, m\}$
such that
\[
u(t) = \sum_{k=1}^m \alpha_k v_k(t)
\]
for all $t \in T$. We claim that this holds if and only if
\[
\bigcap_{k=1}^m \ker v_k \subseteq \ker u.
\]

To see this, suppose first that $\bigcap \ker v_k \subseteq \ker u$.
Now there exist $r \equiv (r_i)$ and $s_k \equiv (s_k^{(i)})$ in~$T$,
for $k = 1, \ldots, m$, such that
\[
u(t) = \ip{r}{t} \quad\mbox{and}\quad v_k(t) = \ip{s_k}{t}
\]
for all $t \in T$. Hence $\bigcap \ker v_k$ is the orthogonal
complement within~$T$ of the subspace of~$T$ generated by the~$\{s_k\}$,
and it follows that $r$ lies in the subspace generated by the~$\{s_k\}$.
Thus $u(t) = \sum_{k=1}^m \alpha_k v_k(t)$ for all $t \in T$,
for suitable scalars~$\{\alpha_k\}$.
The converse is clear, and so the claim holds.

Suppose that $s \equiv (s_i)$ satisfies $\sum s_i = 1$.
Straightforward manipulations then show that
\[
u(s) = \frac{1}{2} \sum_{i,j=1}^n s_i s_j \|y_i - y_j\|^2
       + \sum_{i,j=1}^n s_i s_j \ip{y_i}{y_j}
\]
and
\[
\sum_{k=1}^m v_k(s)^2 = \sum_{i,j=1}^n s_i s_j \ip{y_i}{y_j},
\]
giving
\[
u(s) = \frac{1}{2} \sum_{i,j=1}^n s_i s_j \|y_i - y_j\|^2 + \sum_{k=1}^m v_k(s)^2.
\]
(Note that $s$ is not in the domain~$T$ of the functionals $u$ and~$v_k$
as defined earlier, but we use the same symbols to denote the functions
whose values on~$s$ are defined by the same expressions.)

Given $t \equiv (t_i) \in T$, define $s \equiv (s_i) \in \R^n$
by setting $s_1 = t_1 + 1$ and $s_i = t_i$ for $i = 2, \ldots, n$,
so that $\sum s_i = 1$. Then we clearly have
\[
u(t) = u(s) - \|y_1\|^2
\quad\mbox{and}\quad
v_k(t) = v_k(s) - \ip{y_1}{e_k}
\]
for each~$k$.
Hence if $t \in \bigcap \ker v_k$, we have
\begin{eqnarray*}
u(t)
 & = & u(s) - \|y_1\|^2 \\
 & = & \frac{1}{2} \sum_{i,j=1}^n s_i s_j \|y_i - y_j\|^2
       + \sum_{k=1}^m v_k(s)^2 - \|y_1\|^2 \\
 & = & \frac{1}{2} \sum_{i,j=1}^n s_i s_j d(x_i, x_j)
       + \sum_{k=1}^m \bigl( v_k(t) + \ip{y_1}{e_k} \bigr)^2 
       - \|y_1\|^2 \\
 & = & \frac{1}{2} \sum_{i,j=1}^n s_i s_j d(x_i, x_j)
       + \sum_{k=1}^m \ip{y_1}{e_k}^2 
       - \|y_1\|^2 \\
 & = & \frac{1}{2} \sum_{i,j=1}^n s_i s_j d(x_i, x_j) \\
 & \leq & \frac{1}{2} \mbar(X).
\end{eqnarray*}
But since this holds for all $t \in \bigcap \ker v_k$ and $\mbar(X)$
is finite, we must have $u(t) = 0$ for all $t \in \bigcap \ker v_k$.
Thus $\bigcap \ker v_k \subseteq \ker u$, and the result follows.
\end{proof}

We show later (Theorem~\ref{sphere500}) that the above implication holds
when $X$ is a general compact metric space, with the corresponding sphere
then lying in general in the Hilbert space~$\ell^2$.

In \cite{AandS}, Alexander and Stolarsky made use of S-embeddings
on spheres to derive interesting results on~$\mbar$ and related matters
for subsets of euclidean spaces.
In the following result, we gather together some of their observations,
specialized to the case of finite spaces, but generalized
to the non-euclidean case, along with some new observations.

Recall (see~\cite{NW1}) that for a compact metric space $\Xd$,
we write $M^+(X) = \sup \{ \Imu : \mu \in \Moneplus(X) \}$.

\begin{theorem}
\label{sphere200}
Let $(X = \{x_1, \ldots, x_n\}, d)$ be a finite metric space,
and suppose that $X$ is S-embedded as the set\/ $Y = \{y_1, \ldots, y_n\}$
on a sphere~$S$ of radius~$r$ in some euclidean space,
where the S-embedding maps $x_i$ to~$y_i$ for $i = 1, \ldots, n$.
Then we have the following.
\begin{enumerate}
\item
$\mbar(X) \leq 2r^2$.
\item
There exists a maximal measure on~$X$.
\end{enumerate}
If further the S-embedding of~$X$ is into a euclidean space of minimal dimension,
then we have the following.
\begin{enumerate}
\setcounter{enumi}{2}
\item
$\mbar(X) = 2r^2$.
\item
$M^+(X) = 2(r^2 - s^2)$, where $s$ is the distance from the centre of~$S$
to the convex hull of\/~$Y$.
\item
If $w_1, \ldots, w_n \in \R$ are such that
$\sum_{i=1}^n w_i = 1$, then $\sum_{i=1}^n w_i \delta_{x_i}$ is a maximal measure
on~$X$ if and only if $\sum_{i=1}^n w_i y_i$ is the centre of~$S$.
\end{enumerate}
\end{theorem}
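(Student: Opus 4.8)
The plan is to reduce all five assertions to a single explicit formula for $\Imu$ in terms of the embedded geometry, after which each statement follows by elementary optimization.

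Let $c$ denote the centre of the sphere $S$, so that $\|y_i - c\| = r$ for every $i$, and recall that, since an S-embedding is an isometry of $(X, d^{1/2})$ into euclidean space, we have $d(x_i, x_j) = \|y_i - y_j\|^2$ for all $i, j$. Because $X$ is finite, every measure in $\MoneX$ has the form $\mu = \sum_{i=1}^n w_i \delta_{x_i}$ with $\sum_{i=1}^n w_i = 1$; write $P(\mu) = \sum_{i=1}^n w_i y_i$ for the image barycentre. Expanding $\|y_i - y_j\|^2 = 2r^2 - 2\ip{y_i - c}{y_j - c}$ and using $\sum_i w_i = 1$ to collapse the double sum (exactly the kind of manipulation already carried out in the proof of Theorem~\ref{sphere100}), one obtains the master formula
\begin{equation*}
\Imu = 2r^2 - 2\|P(\mu) - c\|^2. \tag{$\dagger$}
\end{equation*}
I expect verifying $(\dagger)$ to be the technical heart of the argument, though it is routine.

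From $(\dagger)$ the first two assertions are immediate. Since $\|P(\mu) - c\|^2 \geq 0$, we get $\Imu \leq 2r^2$ for every $\mu \in \MoneX$, which is~(1). For~(2), observe that as $\mu$ ranges over $\MoneX$ the point $P(\mu)$ ranges over exactly the affine hull $A$ of $\{y_1, \ldots, y_n\}$; as $A$ is a finite-dimensional, hence closed, affine subspace, the orthogonal projection of $c$ onto $A$ realizes $\inf_{p \in A}\|p - c\|^2$, and any $\mu$ with $P(\mu)$ equal to this projection maximizes $\Imu$ and is therefore a maximal measure.

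For the statements under the minimal-dimensionality hypothesis, the key point is that minimality forces $A$ to be the whole ambient euclidean space: otherwise $Y$ would lie in a proper affine subspace isometric to a euclidean space of strictly smaller dimension, contradicting minimality. In particular $c \in A$, so $\inf_{p \in A}\|p - c\| = 0$ and $(\dagger)$ gives $\mbar(X) = 2r^2$, which is~(3). Assertion~(5) then follows at once, since by $(\dagger)$ a measure $\mu$ satisfies $\Imu = 2r^2 = \mbar(X)$, i.e.\ is maximal, precisely when $\|P(\mu) - c\| = 0$, that is, when $\sum_i w_i y_i = c$. Finally, for~(4) we restrict to probability measures: as $\mu$ ranges over $\MoneplusX$ the barycentre $P(\mu)$ ranges over the convex hull $\mathrm{conv}(Y)$, which is compact, so $(\dagger)$ yields $\mbar^+(X) = 2r^2 - 2\inf_{p \in \mathrm{conv}(Y)}\|p - c\|^2 = 2(r^2 - s^2)$, where $s = \mathrm{dist}(c, \mathrm{conv}(Y))$ by definition of $s$.

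The main obstacle is thus simply establishing $(\dagger)$ correctly; once it is in hand the theorem unwinds mechanically. The only further point requiring care is the uniqueness of the circumscribing sphere and the deduction that minimal dimension forces $c \in A$ — both of which rest on the fact that, when $Y$ affinely spans the ambient space, the vectors $\{y_i - y_j\}$ span its direction space, so a point equidistant from all the $y_i$ is unique and necessarily lies in $A$. Everything else is linear algebra and convex geometry.
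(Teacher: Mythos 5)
Your proposal is correct and follows essentially the same route as the paper: both rest on the identity $\sum_{i,j} w_i w_j d(x_i,x_j) = 2r^2 - 2\bigl\| \sum_i w_i y_i - c \bigr\|^2$ for $\sum_i w_i = 1$, and then read off (1), (3), (4) and (5) by optimizing the barycentre over the affine hull and the convex hull of $Y$ respectively, with minimal dimension forcing the centre into the affine hull. The one small divergence is part (2), where the paper simply invokes Theorem~4.11 of~\cite{NW2} to pass from $\mbar(X) < \infty$ to the existence of a maximal measure, whereas you construct one directly by taking weights whose barycentre is the orthogonal projection of the centre onto the (closed, finite-dimensional) affine hull --- a slightly more self-contained argument that works equally well.
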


\begin{proof}
Suppose without loss of generality that the centre of the
sphere~$S$ is~$0$. If $w_1, \ldots, w_n \in \R$
satisfy $\sum_{i=1}^n w_i = 1$, then a straightforward calculation
(cf.~Lemma~3.2 of~\cite{AandS}) gives
\[
\sum_{i,j=1}^n w_i w_j d(x_i, x_j)
 = \sum_{i,j=1}^n w_i w_j \|y_i - y_j\|^2
 = 2r^2 - 2 \biggl\| \sum_{i=1}^n w_i y_i \biggr\|^2,
\]
and it follows that
\[
\mbar(X)
 =
2r^2\
 - 2 \inf
       \biggl\{
           \Bigl\| \sum_{i=1}^n w_i y_i \Bigr\|^2 : \sum_{i=1}^n w_i = 1
       \biggr\}
\]
and that
\[
M^+(X)
 =
2r^2
 - 2 \inf
       \biggl\{ \Bigl\| \sum_{i=1}^n w_i y_i \Bigr\|^2 :
         w_1, \ldots, w_n \geq 0 \mbox{ and } \sum_{i=1}^n w_i = 1
       \biggr\}.
\]
This gives (1), and then (2) follows by Theorem~\nwrefB{theoremDcorollary}.
Now assume that the S-embedding of~$X$ is into $\R^k$,
where $k$ is the minimum dimension possible,
so that the affine hull of~$Y$ is~$\R^k$.
Then there exist $w_1, \ldots, w_n$ with $\sum_{i=1}^n w_i = 1$
such that $\sum_{i=1}^n w_i y_i = 0$, and it follows from
the expression derived above for $\mbar(X)$
that $\mbar(X) = 2r^2$, and we have~(3).
The expression for $\mbar(X)$ also clearly gives~(5).
Finally, since the distance~$s$ from the centre of~$S$
to the convex hull of~$Y$ is
\[
\inf \biggl\{ \Bigl\| \sum_{i=1}^n w_i y_i \Bigr\| :
    w_1, \ldots, w_n \geq 0 \mbox{ and } \sum_{i=1}^n w_i = 1 \biggr\},
\]
the expression derived above for $M^+(X)$ gives~(4).
\end{proof}

\begin{corollary}
\label{sphere300}
In the circumstances of the theorem,
\begin{enumerate}
\item
there is a unique maximal measure on~$X$ if and only if
the S-embedded set\/~$Y$ is affinely independent, and
\item
if the S-embedding is into a space of minimal dimension,
then the maximal measure on~$X$ given by the theorem
is a probability measure if and only if
the centre of the sphere~$S$ is in the convex hull of\/~$Y$.
\end{enumerate}
\end{corollary}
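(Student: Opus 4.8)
The plan is to read both parts off the single identity established at the start of the proof of Theorem~\ref{sphere200}. Taking the centre of~$S$ to be~$0$, recall that for every $w \equiv (w_i) \in \R^n$ with $\sum_{i=1}^n w_i = 1$ we have
\[
\sum_{i,j=1}^n w_i w_j\, d(x_i, x_j)
 = 2r^2 - 2 \Bigl\| \sum_{i=1}^n w_i y_i \Bigr\|^2 .
\]
Since $X$ is finite, every measure in $\Mone(X)$ has the form $\mu_w = \sum_{i=1}^n w_i \delta_{x_i}$ with $\sum_{i=1}^n w_i = 1$, and $I(\mu_w)$ is precisely the left-hand side. Hence $\mu_w$ is a maximal measure if and only if $w$ minimizes $f(w) = \| \sum_{i=1}^n w_i y_i \|^2$ over the affine hyperplane $A = \{ w : \sum_{i=1}^n w_i = 1 \}$. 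Both parts of the corollary are then statements about this minimization problem.

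For part~(1), I would introduce the linear map $\phi \colon \R^n \to E$ given by $\phi(w) = \sum_{i=1}^n w_i y_i$, together with the hyperplane $T = \{ t : \sum_{i=1}^n t_i = 0 \}$ used in the theorem. The image $\phi(A)$ is a closed affine subset of the finite-dimensional space~$E$, so it contains a unique point~$p$ of least norm, and the minimizers of~$f$ on~$A$ are exactly the $w \in A$ with $\phi(w) = p$. Any two such minimizers differ by an element of $\ker\phi \cap T$, so the maximal measure is unique if and only if $\ker\phi \cap T = \{0\}$. But $\ker\phi \cap T$ consists of those $w$ with $\sum_{i=1}^n w_i y_i = 0$ and $\sum_{i=1}^n w_i = 0$, and its triviality is exactly the statement that $y_1, \ldots, y_n$ are affinely independent. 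This gives~(1).

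For part~(2), I would assume the S-embedding is of minimal dimension. Then, by part~(5) of the theorem, the maximal measures are exactly the $\mu_w$ with $\phi(w) = 0$, that is, with $\sum_{i=1}^n w_i y_i$ equal to the centre of~$S$, and such~$w$ exist because the affine hull of~$Y$ is all of~$E$. Such a $\mu_w$ is a probability measure precisely when in addition $w_i \geq 0$ for all~$i$, so a maximal probability measure exists if and only if the centre~$0$ can be written as $\sum_{i=1}^n w_i y_i$ with $w_i \geq 0$ and $\sum_{i=1}^n w_i = 1$; this is exactly the condition that the centre of~$S$ lie in the convex hull of~$Y$. (Here I read the assertion that ``the maximal measure is a probability measure'' as the existence of a maximal measure that is a probability measure, which is consistent with the value $M^+(X) = 2(r^2 - s^2)$ of part~(4), the centre lying in the convex hull being the case $s = 0$.) This gives~(2).

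The only real content is the observation in part~(1) that every minimizer of~$f$ on~$A$ maps to the same least-norm point~$p$ of the affine set $\phi(A)$; once this uniqueness of the nearest point is in hand, both equivalences reduce to routine linear algebra and the definition of the convex hull. The one point requiring care is the interpretation of part~(2) when the maximal measure is not unique, which I resolve by reading the statement as an existence assertion.
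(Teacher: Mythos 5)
Your proof is correct, and it rests on the same foundation as the paper's: the identity $I\bigl(\sum_i w_i\delta_{x_i}\bigr) = 2r^2 - 2\bigl\|\sum_i w_i y_i\bigr\|^2$ from the proof of Theorem~\ref{sphere200}, which turns both assertions into statements about minimizing $\|\sum_i w_i y_i\|$ over the affine hyperplane $\sum_i w_i = 1$. The one genuine difference is in part~(1): the paper first assumes the embedding has minimal dimension, reads uniqueness off part~(5) of the theorem as uniqueness of the affine representation of the centre, and then transfers the conclusion to arbitrary S-embeddings by noting (via the argument for part~(3)) that affine independence of~$Y$ is an intrinsic property of the distances. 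You instead argue directly in an arbitrary embedding: the image of the hyperplane under $w \mapsto \sum_i w_i y_i$ is a closed affine set with a unique nearest point~$p$ to the origin, all maximizers map to~$p$, and uniqueness is therefore equivalent to triviality of $\ker\phi \cap T$, i.e.\ to affine independence. Your route avoids the reduction to the minimal-dimension case and the appeal to intrinsicness, at the cost of re-deriving a small piece of the theorem's proof; the paper's route is shorter given that part~(5) is already on the table. Your reading of part~(2) as an existence assertion, justified by the identification of $s=0$ with the centre lying in the convex hull, matches what the paper intends when it says the claim is immediate from part~(5).
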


\begin{proof}
Suppose that the S-embedding is into a space of minimal dimension.
Then by part~(5) of the theorem, there is a unique maximal measure on~$X$
if and only if $0$ can be written as an affine combination
of $y_1 \ldots, y_n$ in a unique way, and this is the case
if and only if $Y$ is a maximal affinely independent set.
By the argument used for part~(3) of the theorem, this is equivalent
in the general case to the affine independence of~$Y$, giving~(1).
Assertion~(2) is immediate from part~(5) of the theorem.
\end{proof}

Now we can prove the result alluded to earlier which expresses
metric properties of~$X$ as equivalent geometric conditions
on S-embeddings of~$X$ and also as equivalent conditions
on $d$-invariant measures of mass~$1$ on~$X$.
(We prefer to speak of invariant measures of mass~$1$ here rather than
of maximal measures---%
see section~\nwrefB{maxinvmeasures} for the relevant definitions---%
but recall that by Theorem~\nwrefB{2.2}
these classes of measures coincide in any compact quasihypermetric space.)

\begin{theorem}
\label{sphere400}
Let $\Xd$ be a finite quasihypermetric space.
\begin{enumerate}

\item
The following conditions are equivalent.
\begin{enumerate}
\item
$\mbar(X) < \infty$.
\item
There exists a $d$-invariant measure in $\Mone(X)$. 
\item
Some S-embedding of~$X$ in a euclidean space lies on a sphere.
\item
Every S-embedding of $X$ in a euclidean space lies on a sphere.
\end{enumerate}

\item
The following conditions are equivalent.
\begin{enumerate}
\item
$M^+(X) = \mbar(X)$.
\item
There exists a $d$-invariant measure in $\Moneplus(X)$.
\item
Some S-embedding of~$X$ in a euclidean space of minimal dimension
lies on a sphere whose centre is in the convex hull of the S-embedded set.
\item
Every S-embedding of $X$ in a euclidean space of minimal dimension
lies on a sphere whose centre is in the convex hull of the S-embedded set.
\end{enumerate}

\item
The following conditions are equivalent.
\begin{enumerate}
\item
$X$ is strictly quasihypermetric.
\item
There exists a unique $d$-invariant measure in $\Mone(X)$.
\item
Some S-embedding of~$X$ in a euclidean space
is an affinely independent set.
\item
Every S-embedding of $X$ in a euclidean space
is an affinely independent set.
\end{enumerate}

\end{enumerate}
\end{theorem}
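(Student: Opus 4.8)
My plan is to prove the three parts by assembling Theorems~\ref{sphere100} and~\ref{sphere200} and Corollary~\ref{sphere300}, using throughout the identification of $d$-invariant measures of mass~$1$ with maximal measures furnished by Theorem~\nwrefB{2.2}. The single observation that collapses each \emph{some}~$\Leftrightarrow$~\emph{every} dichotomy is that $\mbar(X)$, $M^+(X)$ and the seminorm on $\Ezero(X)$ are intrinsic to~$X$, so that the numerical data attached to an S-embedding (its radius, the distance from centre to convex hull, the norms of mass-zero measures) are the same for every embedding; and since $X$ is finite and quasihypermetric, Schoenberg's theorem guarantees that S-embeddings exist, so no \emph{some} condition is vacuous.

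For part~(1) I would run the cycle (a)~$\Rightarrow$~(d)~$\Rightarrow$~(c)~$\Rightarrow$~(a): the first implication is exactly Theorem~\ref{sphere100}, the second is immediate once an S-embedding is known to exist, and the third follows from the bound $\mbar(X) \leq 2r^2$ in Theorem~\ref{sphere200}(1). To close with (a)~$\Leftrightarrow$~(b), I note that by Theorem~\nwrefB{2.2} a $d$-invariant measure in $\MoneX$ is a maximal measure; if $\mbar(X) < \infty$ then any S-embedding lies on a sphere (by the implication just proved) and Theorem~\ref{sphere200}(2) produces a maximal measure, while conversely a maximal measure attains $\mbar(X)$ as the finite value $\Imu$, forcing $\mbar(X) < \infty$.

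For part~(2) I would first clear away the case $\mbar(X) = \infty$, in which all four conditions are false: $M^+(X)$ is finite because $\MoneplusX$ is compact and $I$ is continuous, so (a) fails; no maximal measure exists, so (b) fails; and by part~(1) no S-embedding lies on a sphere, so (c) and~(d) fail. Assuming $\mbar(X) < \infty$, I fix a minimal-dimension S-embedding, which lies on a sphere of radius~$r$ by part~(1), and invoke $\mbar(X) = 2r^2$ and $M^+(X) = 2(r^2 - s^2)$ from Theorem~\ref{sphere200}(3),(4). Then (a) holds iff $s = 0$, i.e.\ iff the centre lies in the convex hull of the embedded set, and this is equivalent to~(b) by Corollary~\ref{sphere300}(2) together with Theorem~\nwrefB{2.2}. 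Because $r^2 = \mbar(X)/2$ and $s^2 = r^2 - M^+(X)/2$ are intrinsic, the condition $s = 0$ holds for some minimal-dimension embedding iff it holds for all, giving (c)~$\Leftrightarrow$~(d).

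For part~(3) the computational heart is that, for $\mu = \sum_i \alpha_i \delta_{x_i}$ with $\sum_i \alpha_i = 0$ and any S-embedding $x_i \mapsto y_i$,
\[
\|\mu\|^2 = -\Imu = -\sum_{i,j} \alpha_i \alpha_j \|y_i - y_j\|^2 = 2 \Bigl\| \sum_i \alpha_i y_i \Bigr\|^2,
\]
the $\|y_i\|^2$ terms cancelling because $\sum_i \alpha_i = 0$. Thus $\|\mu\| = 0$ iff $\sum_i \alpha_i y_i = 0$, so the seminorm is definite---that is, $X$ is strictly quasihypermetric---exactly when the embedded set is affinely independent; as the left side is intrinsic, this yields (a)~$\Leftrightarrow$~(c)~$\Leftrightarrow$~(d) simultaneously. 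For (a)~$\Leftrightarrow$~(b) I would use that an affinely independent set is a simplex and so lies on its circumscribed sphere, whence affine independence gives $\mbar(X) < \infty$ by part~(1), after which Corollary~\ref{sphere300}(1) and Theorem~\nwrefB{2.2} make uniqueness of the $d$-invariant measure equivalent to affine independence. I expect the main obstacle to be bookkeeping rather than computation: keeping the degenerate case $\mbar(X) = \infty$ consistent across all three parts, verifying that the \emph{some}/\emph{every} dichotomies genuinely reduce to the intrinsic numerical conditions above, and noticing the small geometric fact that an affinely independent set automatically lies on a sphere, which is precisely what lets part~(3) re-enter the finiteness machinery of part~(1).
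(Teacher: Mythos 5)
Your proposal is correct, and for parts (1) and (2) it follows essentially the paper's route: the same assembly of Theorems \ref{sphere100} and \ref{sphere200} and Corollary \ref{sphere300}, with the identification of $d$-invariant measures of mass~$1$ and maximal measures doing the work for the conditions~(b); your explicit disposal of the case $\mbar(X)=\infty$ in part~(2), and your observation that the radius and the centre-to-hull distance are determined by the intrinsic quantities $\mbar(X)$ and $M^+(X)$, merely make explicit what the paper leaves implicit (the paper's ``compactness yields~(d)'' is your remark that the convex hull of a finite set is closed). Part~(3) is where you genuinely diverge. The paper proves (a)~$\Leftrightarrow$~(b) by measure-theoretic means: (a)~$\Rightarrow$~(b) via the finiteness of $\mbar$ on strictly quasihypermetric finite spaces plus the uniqueness clause of the invariant-measure theorem, and (b)~$\Rightarrow$~(a) by contraposition, manufacturing a second $d$-invariant measure $\mu+\nu$ from a non-zero $d$-invariant $\nu\in\Mzero(X)$ of value~$0$ supplied by a lemma of the first paper; it then reaches (c) and~(d) through Corollary~\ref{sphere300}(1). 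You instead prove (a)~$\Leftrightarrow$~(c)~$\Leftrightarrow$~(d) directly from the identity $-I(\mu)=2\bigl\|\sum_i\alpha_i y_i\bigr\|^2$ for mass-zero $\mu=\sum_i\alpha_i\delta_{x_i}$, which identifies the null space of the seminorm on $\Mzero(X)$ with the affine dependencies among the embedded points; this is precisely the computation the paper defers to the proof of Theorem~\ref{newsphere100}, and deploying it here is clean and self-contained, at the cost of needing the small geometric fact (which you correctly supply) that an affinely independent set lies on its circumscribed sphere, so that part~(1) and Corollary~\ref{sphere300}(1) can be invoked to close the loop with~(b). Both routes are sound: the paper's stays uniformly inside the invariant-measure machinery of the earlier papers, while yours trades one external lemma for a two-line expansion of the quadratic form.
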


\begin{proof}
(1) Theorem~\nwrefB{2.2} shows that (b) implies~(a),
Theorem~\ref{sphere100} (of the present paper) shows that (a) implies~(d),
the result of Schoenberg~\cite{Sch3} quoted before Theorem~\ref{sphere100}
shows that there exists an S-embedding of~$X$ into a euclidean space,
from which it follows that (d) implies~(c),
and Theorem~\ref{sphere200}, with Theorem~\nwrefB{2.2},
shows that (c) implies~(b).

\smallskip
(2) Assume~(a), and consider any S-embedding of~$X$
on a sphere in a euclidean space of minimal dimension.
Then using~(a) and parts (3) and~(4) of Theorem~\ref{sphere200},
we find that the distance of the centre of the sphere
from the convex hull of the embedded set is~$0$, and compactness yields~(d).
That (d) implies~(c) is shown as in part~(1), and
Corollary~\ref{sphere300}, with Theorem~\nwrefB{2.2},
shows that (c) implies~(b).
Assume~(b) and let $\mu \in \Moneplus(X)$ be $d$-invariant.
Then Theorem~\nwrefB{2.2} shows that $\mu$ has value $\mbar(X)$.
But since $\mu \in \Moneplus(X)$, it follows that
$M^+(X) = \mbar(X)$, and we have~(a).

\smallskip
(3) Corollary~\ref{sphere300} and Theorem~\nwrefB{2.2}
show that (c) implies~(b), those results together with part~(1)
show that (b) implies~(d),
and the fact that (d) implies~(c) is shown as earlier.
Assume that $X$ is strictly quasihypermetric. Then by Theorem~\nwrefB{2.13}
(see Theorem~\ref{2.13new} below),
we have $\mbar(X) < \infty$, so by part~(1), there exists a $d$-invariant
$\mu \in \Mone(X)$, which is unique by part~(4) of Theorem~\nwrefB{2.2}.
Thus, (a) implies~(b).
Now assume that $X$ is not strictly quasihypermetric. If $\mbar(X) = \infty$,
then by Theorem~\nwrefB{2.2}, $X$ has no $d$-invariant measure of mass~$1$.
If $\mbar(X) < \infty$, then by part~(1), there exists a
$d$-invariant $\mu \in \Mone(X)$.
Since $X$ is not strictly quasihypermetric, it follows from parts (2) and~(5)
of Lemma~\nwrefA{2.7} that there exists a non-zero $d$-invariant measure
$\nu \in \Mzero(X)$ (which, by Theorem~\nwrefB{2.13}, has value~$0$).
It follows that $\mu + \nu \in \Mone(X)$, that $\mu + \nu$ is
$d$-invariant, and that $\mu + \nu \neq \mu$, so that
there is more than one $d$-invariant measure of mass~$1$ on~$X$.
Thus, (b) implies~(a), completing the proof.
\end{proof}

\begin{remark}
\label{Assouad}
In~\cite{Ass1}, Assouad develops characterizations
of the hypermetric property and the property of $L^1$-embeddability
of a finite metric space. A space has one of these properties
if it can be S-embedded on a sphere in euclidean space
in such a way as to satisfy an additional lattice-theoretical constraint,
stronger in the second case than the first, since $L^1$-embeddability
implies the hypermetric property (cf.~our Theorem~\ref{2.15} below,
the proof of which can be adapted routinely to show this).
It follows by Theorem~\ref{sphere200} that such spaces
have $\mbar$ finite (cf.~Theorem~\ref{2.16}).
It would be interesting to know if there are characterizations
of these two properties in terms of invariant measures.
\end{remark}

\section{Metric Embeddings of General Spaces}
\label{embeddingsII}

We begin by noting the following result,
which relates the value of~$\mbar$ on a general compact metric space~$X$
and the value of~$\mbar$ on the finite subsets of~$X$
(see also Theorem~\ref{2.28} below).  
The result can easily be proved either by an argument
similar to that needed to show that (1) implies~(3)
in Theorem~\nwrefA{qhmconds},
or by adapting the proof of Lemma~3.3 of~\cite{AandS}.

\begin{theorem}
\label{sphere50}
If $X$ is a compact metric space, then $\mbar(X)$ is the supremum
of the values $\mbar(F)$ for finite subsets~$F$ of~$X$.
\end{theorem}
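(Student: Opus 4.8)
The plan is to establish the two inequalities $\sup_F \mbar(F) \le \mbar(X)$ and $\mbar(X) \le \sup_F \mbar(F)$ separately, where $F$ ranges over the finite subsets of $X$.

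The first inequality is immediate. If $F \subseteq X$ is finite and $\nu \in \Mone(F)$, then $\nu$ may be regarded as a signed measure on $X$ supported on $F$; it still has total mass~$1$, so $\nu \in \Mone(X)$, and since the metric on $F$ is the restriction of $d$, the value $I(\nu)$ is unchanged. Hence $I(\nu) \le \mbar(X)$, and taking the supremum first over $\nu \in \Mone(F)$ and then over $F$ gives $\sup_F \mbar(F) \le \mbar(X)$.

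For the reverse inequality I would fix an arbitrary $\mu \in \Mone(X)$ and show $\Imu \le \sup_F \mbar(F)$; the result then follows on taking the supremum over $\mu$. The idea is to approximate $\mu$ by a finitely supported measure, much as in the argument that (1) implies~(3) in Theorem~\nwrefA{qhmconds}. Given $\epsilon > 0$, the total boundedness of the compact space $X$ lets me cover $X$ by finitely many balls of radius $< \epsilon/2$; disjointifying these and discarding the empty pieces produces a finite Borel partition $A_1, \ldots, A_N$ of $X$ with $\mathrm{diam}(A_i) < \epsilon$ for each~$i$. Choosing a representative $x_i \in A_i$, setting $F = \{x_1, \ldots, x_N\}$ and $\mu' = \sum_i \mu(A_i)\,\delta_{x_i}$, I obtain a measure $\mu' \in \Mone(F)$, since its total mass is $\sum_i \mu(A_i) = \mu(X) = 1$; thus $I(\mu') \le \mbar(F) \le \sup_F \mbar(F)$.

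The heart of the argument is the estimate of $|\Imu - I(\mu')|$. Writing both double integrals as sums over the products $A_i \times A_j$ and using that for $x \in A_i$, $y \in A_j$ the triangle inequality gives $|d(x,y) - d(x_i,x_j)| \le d(x,x_i) + d(y,x_j) < 2\epsilon$, I obtain
\[
|\Imu - I(\mu')| \le 2\epsilon\, |\mu|(X)^2,
\]
whence $\Imu \le \sup_F \mbar(F) + 2\epsilon\,|\mu|(X)^2$. The one point to watch is that this error bound involves the total variation $|\mu|(X)$, which need not be controlled by the total mass; but since $\mu$ is held fixed while $\epsilon \to 0$, no uniformity over $\mu$ is needed and the error tends to~$0$, yielding $\Imu \le \sup_F \mbar(F)$. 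Taking the supremum over $\mu \in \Mone(X)$ completes the proof. (The same reasoning covers the case $\mbar(X) = \infty$: then $\Imu$ is unbounded over $\mu \in \Mone(X)$, forcing $\sup_F \mbar(F) = \infty$ as well.)
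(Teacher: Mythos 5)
Your proof is correct, and it is essentially the argument the paper has in mind: the paper does not write the proof out but refers to the reasoning used to show that (1) implies (3) in Theorem~\nwrefA{qhmconds}, which is exactly the discretization-by-small-diameter-partition argument you carry out in full, including the correct observation that the error term $2\epsilon\,|\mu|(X)^2$ involves the total variation but causes no harm because $\mu$ is fixed as $\epsilon \to 0$. Both inequalities and the infinite case are handled properly, so nothing is missing.
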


\begin{definition}
\label{2.14}
Let $\Xd$ be a metric space. We say that
\textit{$X$ admits an $L^1$-embedding} if there exists a probability
space $(\Omega, A, \p)$ and a mapping $i \colon X \to L^1(\Omega)$
such that $d(x, y) = \|i(x) - i(y) \|$ for all $x, y \in X$.
Further, we say that this embedding is \textit{uniformly bounded}
if $\sup|i(x)(\omega)| < \infty$, where $x$ and~$\omega$ range over $X$ and~$\Omega$,
respectively.
\end{definition}

Assertion (1) of the following result is well known,
as is the stronger assertion that an $L^1$-embeddable space is hypermetric
(for the definition of the hypermetric property,
see section~\ref{finitespaces} below).

\begin{theorem}
\label{2.15}
Let $X$ be a metric space admitting an $L^1$-embedding. Then we have the following.
\begin{enumerate}
\item[(1)]
$X$ is quasihypermetric.
\item[(2)]
If additionally $X$ is compact and the given $L^1$-embedding
is uniformly bounded, with $|i(x)(\omega)| \leq K$
for some $K \geq 0$ and for all $x \in X$ and $\omega \in \Omega$,
then $\mbar(X) \leq K$.
\end{enumerate}
\end{theorem}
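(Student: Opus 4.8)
The plan is to translate the $L^1$-embedding into explicit expressions for the distances $d(x,y)$ as integrals over $\Omega$, and then to substitute these into the definition of $\Imu$ to expose the relevant inequalities. Recall that for functions $f, g \in L^1(\Omega)$ one has $\|f - g\| = \int_\Omega |f(\omega) - g(\omega)| \, d\p(\omega)$, and the key elementary identity is that $|a - b| = \int_\R |\chi_{\{a > t\}} - \chi_{\{b > t\}}| \, dt$ for real numbers $a, b$ (the ``cut'' or layer-cake representation). Applied pointwise in $\omega$, this writes $d(x, y)$ as a double integral over $\Omega \times \R$ of a product of indicator differences, and this is exactly what makes the quasihypermetric inequality transparent.

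For part~(1), first I would fix $n \in \N$, scalars $\alpha_1, \dots, \alpha_n$ with $\sum_i \alpha_i = 0$, and points $x_1, \dots, x_n \in X$. Writing $f_i = i(x_i) \in L^1(\Omega)$, I would use the cut representation to express
\[
\sum_{i,j=1}^n \alpha_i \alpha_j d(x_i, x_j)
 = \int_\Omega \! \int_\R \sum_{i,j=1}^n \alpha_i \alpha_j
   \bigl| \chi_{\{f_i > t\}}(\omega) - \chi_{\{f_j > t\}}(\omega) \bigr| \, dt \, d\p(\omega).
\]
For each fixed $(\omega, t)$, the inner sum involves only the two values $0$ and $1$ taken by the indicators, so the points $x_i$ are partitioned into two classes; expanding $|a-b|$ for $a,b \in \{0,1\}$ and using $\sum_i \alpha_i = 0$ reduces the inner sum to $-2 (\sum_{i \in A} \alpha_i)^2 \leq 0$, where $A$ is the class on which the indicator is~$1$. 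Since the integrand is everywhere nonpositive, the whole integral is $\leq 0$, which is the quasihypermetric inequality. This establishes~(1).

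For part~(2), I would pass from finite configurations to general $\mu \in \MoneX$, but the cleanest route is via Theorem~\ref{sphere50}: it suffices to bound $\mbar(F)$ uniformly by $K$ for every finite subset $F$ of~$X$. So I would fix a finite $F = \{x_1, \dots, x_n\}$ and $\mu = \sum_i w_i \delta_{x_i}$ with $\sum_i w_i = 1$, and again apply the cut representation to write $\Imu = \sum_{i,j} w_i w_j d(x_i, x_j)$ as an integral over $\Omega \times \R$. The boundedness hypothesis $|i(x)(\omega)| \leq K$ confines the relevant range of $t$ to $[-K, K]$, since outside this range all indicators agree and the integrand vanishes. For fixed $(\omega, t)$ the inner sum now equals $2 p(1 - p)$ where $p = \sum_{i \in A} w_i$ for the appropriate class~$A$, which is at most $\tfrac{1}{2}$; integrating the bound $2 p (1-p) \leq \tfrac{1}{2}$ over $t \in [-K, K]$ (length $2K$) and over $\Omega$ (total mass~$1$) yields $\Imu \leq \tfrac{1}{2} \cdot 2K = K$. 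Taking the supremum over $F$ and invoking Theorem~\ref{sphere50} gives $\mbar(X) \leq K$.

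The main obstacle I anticipate is the justification of the cut representation in the signed-measure setting and the interchange of the order of integration: I must verify that Tonelli/Fubini applies, which requires checking integrability of $(\omega, t) \mapsto \sum_{i,j} |w_i w_j| \, |\chi_{\{f_i > t\}} - \chi_{\{f_j > t\}}|$. This is where the uniform boundedness in part~(2) does real work, since it makes the $t$-integration range compact and the integrand manifestly integrable; for the unbounded part~(1) one works with a fixed finite configuration, so integrability over $\R \times \Omega$ follows from $\int_\R |\chi_{\{a>t\}} - \chi_{\{b>t\}}| \, dt = |a-b| < \infty$ together with $\p(\Omega) = 1$. Once integrability is secured the sign and magnitude estimates are routine, so the entire difficulty is concentrated in setting up the representation correctly.
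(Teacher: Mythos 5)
Your proof is correct and follows the same skeleton as the paper's: both arguments push the quadratic form inside the integral over $\Omega$ so that everything reduces to a one-dimensional estimate holding pointwise in $\omega$, and both pass from finite configurations to all of $X$ via Theorem~\ref{sphere50}. The only real difference is how the one-dimensional estimate is obtained. The paper simply quotes the facts that $\R$ is quasihypermetric (for part~(1)) and that the quadratic form on the interval $[-K,K]$ with weights summing to~$1$ is bounded by~$K$ (for part~(2), via a corollary from the companion paper), whereas you rederive both from first principles using the cut representation $|a-b|=\int_\R \bigl|\chi_{\{a>t\}}-\chi_{\{b>t\}}\bigr|\,dt$, reducing the pointwise sums to the elementary inequalities $-2\bigl(\sum_{i\in A}\alpha_i\bigr)^2\le 0$ and $2p(1-p)\le\tfrac12$. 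Your version is therefore self-contained at the cost of reproving known one-dimensional facts; note that your bound $2p(1-p)\le\tfrac12$ integrated over a $t$-range of length $2K$ recovers exactly the sharp constant~$K$ that the paper obtains by citation, and your Tonelli concerns are easily discharged since the relevant integrand is nonnegative with $\int_\R\bigl|\chi_{\{f_i(\omega)>t\}}-\chi_{\{f_j(\omega)>t\}}\bigr|\,dt=|f_i(\omega)-f_j(\omega)|$ integrable over the probability space~$\Omega$.
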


\begin{proof}
(1)
Consider $n \in \N$, $x_1, \ldots, x_n \in X$ and $\alpha_1, \ldots, \alpha_n \in \R$
such that $\alpha_1 + \cdots + \alpha_n = 0$. Then, since $\R$ is quasihypermetric, we have
\begin{eqnarray*}
\sum_{i, j = 1}^n \alpha_i \alpha_j d(x_i, x_j)
 & = &
\sum_{i, j = 1}^n \alpha_i \alpha_j \bigl\| i(x_i)-i(x_j) \bigr\| \\
 & = &
\int_\Omega
  \Bigl(
    \sum_{i,j=1}^n \alpha_i \alpha_j
        \bigl| i(x_i)(\omega) - i(x_j)(\omega) \bigr|
  \Bigr)
d\kern0.5pt\p\omega \\
 & \leq &
0.
\end{eqnarray*}
Therefore, $X$ is quasihypermetric.

\smallskip
(2)
Consider $n \in \N$, $x_1, \ldots, x_n \in X$ and $\alpha_1, \ldots, \alpha_n \in \R$
such that $\alpha_1 + \cdots + \alpha_n = 1$. As before, we have
\[
\sum_{i,j=1}^n \alpha_i \alpha_j d(x_i, x_j)
 =
\int_\Omega
  \Bigl(
    \sum_{i,j=1}^n
        \alpha_i \alpha_j \bigl| i(x_i)(\omega) - i(x_j)(\omega) \bigr|
  \Bigr)
d\kern0.5pt\p\omega.
\]
Let $K = \sup \{ |i(x)(\omega)| : x \in X, \omega \in \Omega \}$.
By assumption, $K < \infty$. Applying Corollary~\nwrefB{2.3} to the
interval $[-K, K]$ gives
$\sum_{i,j=1}^n \alpha_i \alpha_j | i(x_i)(\omega) - i(x_j)(\omega)| \leq K$
for all $\omega \in \Omega$,
and so $\sum_{i,j=1}^n \alpha_i \alpha_j d(x_i, x_j) \leq K$.
Therefore, we have $\mbar(F) \leq K$ for all finite subsets~$F$ of~$X$,
and it follows by Theorem~\ref{sphere50} that $\mbar(X) \leq K$.
\end{proof}

We say that a real normed linear space $(E, \|\cdot\|)$
is quasihypermetric if the corresponding metric space $(E, d)$
is quasihypermetric, where $d$ is the norm-induced metric on~$E$.

We wish next to discuss some properties of subsets
of finite-dimensional real normed linear spaces.
Of course, every such space is isometrically isomorphic
to a space $(\R^n, \|\cdot\|)$ for some~$n$ and some norm $\|\cdot\|$,
and so it suffices to restrict attention to subsets of spaces of this type.
We recall the well known fact that for any fixed space $(\R^n, \|\cdot\|)$,
the following three conditions are equivalent.
\begin{enumerate}
\item[(1)]
The space $(\R^n, \|\cdot\|)$ is quasihypermetric.
\item[(2)]
The space $(\R^n, \|\cdot\|)$ is isometrically isomorphic
to a subspace of $L^1([0, 1])$ (the space is $L^1$-embeddable).
\item[(3)]
The norm $\|\cdot\|$ admits a so-called \textit{L\'{e}vy representation};
that is, there exist $\alpha > 0$ and a probability measure~$\p$
on the euclidean unit sphere~$S^{n-1}$ in~$\R^n$ such that
\[
\|x\| = \alpha \int_{S^{n-1}} \bigl| (x \mid \omega) \bigr|
        \, d\kern0.5pt\p (\omega)
\]
for all $x \in \R^n$. 
\end{enumerate}
(For a proof, one can combine Corollaries 1.1 and~1.3
of~\cite{Wit} with Corollaries 2.6 and~6.2 of~\cite{Bol}.)

We have seen that $\mbar(X)$ may be infinite when $X$ is
a compact (or even finite) quasihypermetric space.
In the presence of a linear structure,
however, we have the following result, which generalizes the
euclidean case proved in Theorem~3.8 of~\cite{AandS}.

\begin{theorem}
\label{2.16}
Suppose that $(\R^n, \|\cdot\|)$ is quasihypermetric,
and let $X$ be a subset of\/~$\R^n$ which is compact
when equipped with the norm-induced metric. Then
\begin{enumerate}
\item[(1)]
$\mbar(X) < \infty$ and
\item[(2)]
there exists $c > 0$ such that
$|I(\mu_1) - I(\mu_2)| \leq c \| \mu_1 - \mu_2 \|$
for all $\mu_1, \mu_2 \in \Moneplus(X)$.
\end{enumerate}
\end{theorem}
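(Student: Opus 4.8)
The plan is to derive both assertions from the L\'evy representation of the norm. Since $(\R^n,\|\cdot\|)$ is quasihypermetric, the stated equivalence provides $\alpha>0$ and a probability measure $\p$ on the Euclidean unit sphere $S^{n-1}$ such that $\|x\|=\alpha\int_{S^{n-1}}\bigl|\ip{x}{\omega}\bigr|\,d\p(\omega)$ for all $x\in\R^n$. I will use this representation throughout, together with the fact that, $X$ being compact in $\R^n$, it is bounded in the Euclidean norm $\|\cdot\|_2$, say $\|x\|_2\le R$ for all $x\in X$.

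For~(1), I would convert the L\'evy representation into a uniformly bounded $L^1$-embedding and apply Theorem~\ref{2.15}. Define $i\colon X\to L^1(S^{n-1},\p)$ by $i(x)(\omega)=\alpha\ip{x}{\omega}$. Then
\[
\|i(x)-i(y)\|_{L^1}=\alpha\int_{S^{n-1}}\bigl|\ip{x-y}{\omega}\bigr|\,d\p(\omega)=\|x-y\|=d(x,y),
\]
so $i$ is an $L^1$-embedding, and by Cauchy--Schwarz $|i(x)(\omega)|=\alpha|\ip{x}{\omega}|\le\alpha R$ uniformly in $x\in X$ and $\omega\in S^{n-1}$. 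Thus the embedding is uniformly bounded with constant $K=\alpha R$, and Theorem~\ref{2.15}(2) gives $\mbar(X)\le\alpha R<\infty$, proving~(1).

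For~(2), set $\nu=\mu_1-\mu_2\in\MzeroX$. By bilinearity and symmetry of $I$ we have the identity $I(\mu_1)-I(\mu_2)=I(\mu_1+\mu_2,\nu)=\int_X d_\nu\,d(\mu_1+\mu_2)$. As $\mu_1+\mu_2$ is positive of total mass~$2$, it suffices to establish the uniform bound $\sup_{x\in X}|d_\nu(x)|\le c_0\|\nu\|$ with $c_0$ independent of $\nu$; then $|I(\mu_1)-I(\mu_2)|\le 2c_0\|\nu\|$, so $c=2c_0$ works. To prove this bound, fix $x\in X$, let $\nu_\omega$ denote the push-forward of $\nu$ under the map $y\mapsto\ip{y}{\omega}$ (a mass-zero measure supported in $[-R,R]$), and insert the L\'evy representation of $d(x,y)=\|x-y\|$; Fubini's theorem then gives $d_\nu(x)=\alpha\int_{S^{n-1}}d_{\nu_\omega}(\ip{x}{\omega})\,d\p(\omega)$, where $d_{\nu_\omega}$ is the potential of $\nu_\omega$ computed on the line.

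The heart of the matter is a one-dimensional estimate. For a mass-zero measure $\rho$ on $\R$ supported in an interval $J$ of length $L$, writing $F(u)=\rho((-\infty,u])$ and integrating by parts gives $d_\rho(a)=\int_J|a-t|\,d\rho(t)=-\int_J\sign(u-a)F(u)\,du$, whence, using the identity $-I(\rho)=2\int_\R F(u)^2\,du$ and Cauchy--Schwarz on $J$,
\[
|d_\rho(a)|\le\int_J|F(u)|\,du\le\sqrt{L/2}\,\bigl(-I(\rho)\bigr)^{1/2}.
\]
Applying this with $L\le 2R$ yields $|d_{\nu_\omega}(\ip{x}{\omega})|\le\sqrt R\,(-I(\nu_\omega))^{1/2}$, and combining with the representation $\|\nu\|^2=-I(\nu)=\alpha\int_{S^{n-1}}(-I(\nu_\omega))\,d\p(\omega)$ and Cauchy--Schwarz for the probability measure $\p$ gives
\[
|d_\nu(x)|\le\alpha\sqrt R\int_{S^{n-1}}\bigl(-I(\nu_\omega)\bigr)^{1/2}\,d\p(\omega)\le\sqrt{\alpha R}\,\|\nu\|,
\]
uniformly in $x$. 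This is the required bound with $c_0=\sqrt{\alpha R}$. I expect the main obstacle to be precisely this key bound: the embedding in~(1) and the algebraic identity opening~(2) are routine, whereas the one-dimensional estimate and the passage back through the L\'evy representation carry the real content, and some care is needed to justify the Fubini interchange and the push-forward identities for the finite signed measure~$\nu$.
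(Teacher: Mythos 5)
Your proof is correct. Part (1) follows the paper's route exactly: the L\'evy representation is turned into a uniformly bounded $L^1$-embedding $i(x)(\omega)=\alpha(x\mid\omega)$ with bound $K=\alpha R$, and Theorem~\ref{2.15}(2) gives $\mbar(X)\leq K$. For part (2), however, you diverge genuinely from the paper: there, assertion (2) is obtained simply by citing part~(6) of Theorem~5.3 of~\cite{NW1} (a general Lipschitz property of $I$ on $\Moneplus(X)$ available once the relevant finiteness/boundedness is in hand), whereas you prove the estimate from scratch. Your chain --- the identity $I(\mu_1)-I(\mu_2)=I(\mu_1+\mu_2,\,\mu_1-\mu_2)$, reduction to a uniform bound on $d_\nu$, disintegration over $S^{n-1}$ via the L\'evy representation, the one-dimensional bound $|d_\rho(a)|\leq\sqrt{L/2}\,(-I(\rho))^{1/2}$ coming from the distribution-function identity $-I(\rho)=2\int F^2$, and the two applications of Cauchy--Schwarz --- checks out, including the signs (each $-I(\nu_\omega)\geq 0$ because $\R$ is quasihypermetric and $\nu_\omega$ has mass zero, so the square roots are legitimate), and the Fubini interchanges are harmless for finite signed measures with bounded integrands. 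What your version buys is self-containedness and an explicit constant $c=2\sqrt{\alpha R}$ tied to the L\'evy data; what the paper's version buys is brevity and the fact that the cited result holds for a general compact space once $\mbar(X)<\infty$ (or a bounded $L^1$-embedding) is known, without any linear structure on the ambient space. One small point of care: the norm $\|\mu_1-\mu_2\|$ in the statement is the $\Ezero(X)$ seminorm $[-I(\mu_1-\mu_2)]^{1/2}$, which is how you read it; that reading is the one consistent with the paper's conventions.
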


\begin{proof}
Using the comments above, choose $\alpha > 0$
and a probability measure~$\p$ on the euclidean unit sphere~$S^{n-1}$
in~$\R^n$ such that
\[
\|x\| = \alpha \int_{S^{n-1}} \bigl| (x \mid \omega) \bigr|
        \, d\kern0.5pt\p (\omega)
\]
for all $x \in \R^n$. Define $i \colon X \to L^1(S^{n-1}, \p)$ by setting
$i(x)(\omega) = \alpha(x \mid \omega)$ for $x \in X$ and $\omega \in S^{n-1}$.
Then $\|x - y\| = \| i(x) - i(y) \|$ for all $x, y \in X$, and,
by the compactness of~$X$, there exists~$K$ such that
$|i(x)(\omega)|
 = \alpha \, |(x \mid \omega)|
 \leq \alpha \, \| x \|_2
 \leq K$
for all $x \in X$ and $\omega \in S^{n-1}$,
where $\|\cdot\|_2$ denotes the euclidean norm on~$\R^n$.
Thus $X$ admits a uniformly bounded $L^1$-embedding,
and now Theorem~\ref{2.15} above and part~(6) of Theorem~\nwrefA{2.10}
(see also Remark~\nwrefA{2.12}) complete the proof.
\end{proof}

\begin{remark}
\label{2.16remark}
It is well known that any finite metric space can be
isometrically embedded in~$\R^n$ with the $\infty$-norm for suitable~$n$
(see, for example, part~(1) of Lemma~\nwrefA{newlemma}),
and it is also well known that this normed space
is non-quasihypermetric if $n \geq 3$
(see section~\nwrefA{sect:qhm}). Thus it is the quasihypermetric property
of the enclosing normed space rather than of the embedded metric space
that is crucial for the conclusions of the theorem. 
\end{remark}

\begin{theorem}
\label{theoremE}
Let $(X, d)$ be a compact metric space with $\mbar(X) < \infty$.
Then there exists a mapping $i$ of $X$ into separable Hilbert space
such that
\begin{enumerate}
\addtolength{\itemsep}{1mm}
\item
$\| i(x) \| = \bigl( \frac{1}{2} \mbar(X) \bigr)^{\frac{1}{2}}$
for all $x \in X$ and
\item
$\| i(x) - i(y) \| ^2 = d(x, y)$ for all $x, y \in X$.
\end{enumerate}
\end{theorem}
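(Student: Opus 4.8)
The plan is to realize the embedding inside the Hilbert space completion of the semi-inner product space $\Ezero(X)$ and then to exhibit a single centre for the embedded image. Fix a base point $x_0 \in X$ and let $H$ denote the completion of $\Ezero(X)$ (after quotienting out the null vectors of the seminorm). For $x \in X$ the measure $\delta_x - \delta_{x_0}$ lies in $\MzeroX$, so I would set $j \colon X \to H$, $j(x) = \tfrac{1}{\sqrt 2}(\delta_x - \delta_{x_0})$. Using $(\mu \mid \nu) = -\Imunu$ one computes at once that $\|j(x) - j(y)\|^2 = \tfrac12\bigl(-I(\delta_x - \delta_y)\bigr) = d(x,y)$, so condition~(2) holds for $j$ and for every translate $i = j - z$. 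The whole problem therefore reduces to producing one $z \in H$ with $\|j(x) - z\|^2 = \tfrac12 \mbar(X)$ for all $x$; condition~(1) is then exactly the statement that $i = j - z$ lands on the sphere of radius $\bigl(\tfrac12\mbar(X)\bigr)^{1/2}$ about the origin.

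The natural candidate for a centre is the image of a maximal measure: for $\mu \in \MoneX$ put $z_\mu = \tfrac{1}{\sqrt2}(\mu - \delta_{x_0})$, so that $j(x) - z_\mu = \tfrac{1}{\sqrt2}(\delta_x - \mu)$ and $\|j(x) - z_\mu\|^2 = d_\mu(x) - \tfrac12 I(\mu)$. Were $\mu$ $d$-invariant (equivalently maximal, by Theorem~\nwrefB{2.2}) this would already equal the constant $\tfrac12\mbar(X)$. Since a maximal measure need not exist for a general compact space, I would instead pick $\mu_n \in \MoneX$ with $I(\mu_n) \to \mbar(X)$ and take $z = \lim_n z_{\mu_n}$. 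The crucial point is that $(z_{\mu_n})$ is Cauchy: applying $I \le \mbar(X)$ to the midpoint $\tfrac12(\mu_n + \mu_m) \in \MoneX$ gives $2I(\mu_n,\mu_m) \le 4\mbar(X) - I(\mu_n) - I(\mu_m)$, whence
\[
\|z_{\mu_n} - z_{\mu_m}\|^2 = \tfrac12\bigl(-I(\mu_n - \mu_m)\bigr) \le 2\mbar(X) - I(\mu_n) - I(\mu_m) \longrightarrow 0 .
\]
Completeness of $H$ yields the limit $z$, and $\|j(x) - z_{\mu_n}\|^2 \to \|j(x) - z\|^2 =: \gamma(x)$ for each $x$.

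It remains to show $\gamma(x) = \tfrac12\mbar(X)$ for every $x$, and this is the heart of the matter: the limiting distance must be identically the radius, not merely bounded by it or correct on average. For fixed $x$ I would test maximality along the line $\mu_n + s(\delta_x - \mu_n) \in \MoneX$, $s \in \R$. Writing $g_n := \|j(x) - z_{\mu_n}\|^2 = d_{\mu_n}(x) - \tfrac12 I(\mu_n)$, one finds $I(\delta_x - \mu_n) = -2g_n$ and $I(\mu_n, \delta_x - \mu_n) = g_n - \tfrac12 I(\mu_n)$; maximising the resulting downward parabola in $s$ and invoking $I \le \mbar(X)$ produces the single inequality
\[
\bigl(g_n - \tfrac12 I(\mu_n)\bigr)^2 \le 2\bigl(\mbar(X) - I(\mu_n)\bigr)\, g_n .
\]
Letting $n \to \infty$ (the degenerate case $\mbar(X)=0$ being trivial), the right side tends to $0$ while $g_n \to \gamma(x)$ and $I(\mu_n) \to \mbar(X)$, forcing $(\gamma(x) - \tfrac12\mbar(X))^2 \le 0$, i.e.\ $\gamma(x) = \tfrac12\mbar(X)$. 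Setting $i = j - z$ delivers both conclusions simultaneously.

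For the separability clause I would take the target to be the closed linear span $H_0$ of $j(X)$; since $j$ is Hölder-continuous and $X$ is compact, $H_0$ is separable, and choosing each $\mu_n$ discrete — possible because $\mbar(X) = \sup_F \mbar(F)$ over finite $F \subseteq X$ (Theorem~\ref{sphere50}), with each finite $\mbar(F)$ attained — ensures $z_{\mu_n} \in H_0$ and hence $z \in H_0$. The one genuine obstacle is the possible non-existence of a maximal measure, which forbids centring at an honest $d$-invariant measure; the Cauchy estimate together with the quadratic perturbation inequality, which pins the radius down \emph{pointwise}, is precisely what circumvents this.
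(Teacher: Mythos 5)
Your proof is correct, but it takes a genuinely different route from the paper's. The paper avoids constructing a centre altogether: it equips all of $\MX$ with the modified semi-inner product $\ip{\mu}{\nu} = \mbar(X)\,\mu(X)\nu(X) - I(\mu,\nu)$, which is positive semidefinite precisely because $\mbar(X)<\infty$ (for measures of non-zero mass) and $X$ is quasihypermetric (for measures of mass zero), and then maps $x \mapsto \frac{1}{\sqrt 2}\,\delta_x$ into the completion of the quotient by the null space; condition (1) becomes the identity $\|\delta_x\|^2 = \mbar(X)$ and condition (2) holds because $\delta_x-\delta_y$ has mass zero and so sees only the $-I$ part of the form, making the whole proof a two-line computation. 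You instead stay inside the standard space $\Ezero(X)$, embed via $\delta_x-\delta_{x_0}$, and must manufacture the centre; your two ingredients --- the midpoint estimate showing that the points $z_{\mu_n}$ attached to any sequence $\mu_n\in\MoneX$ with $I(\mu_n)\to\mbar(X)$ form a Cauchy sequence, and the perturbation inequality $\bigl(g_n-\tfrac12 I(\mu_n)\bigr)^2 \le 2\bigl(\mbar(X)-I(\mu_n)\bigr)g_n$ that pins the radius down pointwise --- are both sound, and the latter is a neat self-contained substitute for the uniform convergence $d_{\mu_n}\to\mbar(X)\cdot\underline{1}$ that the paper establishes (via Theorem~\ref{2.28}) when proving the closely related Theorem~\ref{newsphere100}. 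What the paper's trick buys is brevity; what yours buys is an explicit identification of the centre of the sphere as the limit of the images of near-maximal measures. Three small points to tidy: state at the outset that $\mbar(X)<\infty$ forces $X$ to be quasihypermetric (Theorem~\nwrefA{2.6}), since otherwise $\Ezero(X)$ does not carry a semi-inner product at all; in the parabola step handle the degenerate case $g_n=0$ explicitly (boundedness of the resulting affine function in $s$ forces $I(\mu_n)=0$, so the displayed inequality survives as $0\le 0$); and for separability it suffices to take the closed linear span of $j(X)\cup\{z\}$, so the detour through discretely supported $\mu_n$, though correct, is not needed.
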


\begin{proof}
We remark first that $X$ is quasihypermetric, by Theorem~\nwrefA{2.6}.
Define the semi-inner product space $Y$ by setting $Y = \MX$ and
$(\mu \mid \nu ) := \mbar(X) \mu (X) \nu (X) - I(\mu, \nu)$
for $\mu, \nu \in \MX$.
Let $Y_0 = \{ \mu \in Y : \mbox{ } \| \mu \| =0 \}$.
Now let $H$ be the completion of the inner product space $Y / Y_0$,
and define $i \colon X \to H$ by
\[
i(x) := \frac{1}{\sqrt{2}} \delta_x + Y_0
\]
for $x \in X$. Now
\begin{eqnarray*}
\bigl\| i(x) \bigr\| ^2
 & = &
\Bigip%
  {\frac{1}{\sqrt{2}} \delta_x + Y_0}%
  {\frac{1}{\sqrt{2}} \delta_x + Y_0} \\[1mm]
 & = &
\ts\frac{1}{2} \| \delta_x \| ^2 \\[1mm]
 & = &
\ts\frac{1}{2} \mbar(X)
\end{eqnarray*}
for all $x \in X$, and
\begin{eqnarray*}
\bigl\| i(x) - i(y) \bigr\| ^2
 & = &
\Bigip%
  {\frac{1}{\sqrt{2}} (\delta_x - \delta_y) + Y_0}%
  {\frac{1}{\sqrt{2}} (\delta_x - \delta_y) + Y_0} \\[1mm]
 & = &
\ts\frac{1}{2} \| \delta_x - \delta_y \|^2 \\[1mm]
 & = &
-\ts\frac{1}{2} I (\delta_x - \delta_y, \delta_x - \delta_y) \\[1mm]
 & = &
d(x, y)
\end{eqnarray*}
for all $x, y \in X$.
Finally, the image $i(X)$ of~$X$ in~$H$ is homeomorphic to~$X$,
and therefore separable, and standard arguments show that
the closure of the subspace generated by $i(X)$ is separable.
\end{proof}

To continue our discussion of metric embeddings,
we require the following result, which gives
more detailed information than Theorem~\ref{sphere50} above
on the relation between~$\mbar(X)$ and the value
of~$\mbar$ on the finite subsets of~$X$.  

\begin{theorem}
\label{2.28}
Let $\Xd$ be a compact quasihypermetric space.
Let $(x_n)_{n\geq1}$ be any dense sequence in~$X$
and write $X_n = \{x_1, \ldots , x_n \}$ for each $n \in \N$.
Then $\mbar(X_n) \uparrow \mbar(X)$ as $n \to \infty$.
\end{theorem}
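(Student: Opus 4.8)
The plan is to prove the two halves separately: monotonicity of the sequence, and convergence of its limit to $\mbar(X)$, the latter by reducing to the finite-subset formula of Theorem~\ref{sphere50}. For monotonicity, I would note that $X_n \subseteq X_{n+1} \subseteq X$, and that whenever $F \subseteq G$ every mass-one measure supported on~$F$ is also a mass-one measure on~$G$; hence $\Mone(F) \subseteq \Mone(G)$ and so $\mbar(F) \le \mbar(G)$. This gives at once that $(\mbar(X_n))$ is non-decreasing and bounded above by $\mbar(X)$, so it converges in $[0, \infty]$ to a limit~$L$ with $L \le \mbar(X)$. By Theorem~\ref{sphere50} we have $\mbar(X) = \sup_F \mbar(F)$ over finite $F \subseteq X$, so the whole problem reduces to showing $L \ge \mbar(F)$ for every finite subset~$F$ of~$X$.

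The engine of the argument is a perturbation estimate: if $\mu = \sum_{z \in F} w_z \delta_z \in \Mone(F)$ and $\phi \colon F \to X$ satisfies $d(\phi(z), z) < \delta$ for every $z \in F$, then the pushforward $\mu' = \sum_{z \in F} w_z \delta_{\phi(z)}$ again has total mass one and, by the triangle inequality $|d(\phi(z), \phi(z')) - d(z, z')| \le d(\phi(z), z) + d(\phi(z'), z') < 2\delta$, satisfies
\[
\bigl| I(\mu') - \Imu \bigr|
 = \Bigl| \sum_{z, z' \in F} w_z w_{z'}
     \bigl( d(\phi(z), \phi(z')) - d(z, z') \bigr) \Bigr|
 \le 2\delta W^2,
\]
where $W = \sum_{z \in F} |w_z|$. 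The decisive point---and the step I expect to be the main obstacle---is that the supremum defining $\mbar(F)$ ranges over the noncompact set of all real weight vectors summing to one, so the weights cannot be controlled uniformly over that set. The resolution is to fix a single measure~$\mu$ first: once $\mu$ is chosen, $W$ is a fixed finite constant (indeed $W \ge |\sum_z w_z| = 1$), and $\delta$ can then be taken as small as desired so as to make $2\delta W^2$ smaller than any prescribed tolerance. This device also lets the case $\mbar(X) = \infty$ be handled on exactly the same footing as the finite case, since no special treatment of unattained or infinite suprema is then needed.

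To finish, fix a finite $F \subseteq X$ and any real $\lambda < \mbar(F)$, and choose $\mu = \sum_{z \in F} w_z \delta_z \in \Mone(F)$ with $\Imu > \lambda$. Setting $\eta = \Imu - \lambda > 0$ and $\delta = \eta / (2W^2)$, I would then invoke the density of $(x_n)$: for all sufficiently large~$n$, the set~$X_n$ contains a point $\phi(z)$ within~$\delta$ of each $z \in F$. The pushforward $\mu'$ then lies in $\Mone(X_n)$ and, by the estimate above, satisfies $I(\mu') > \Imu - \eta = \lambda$, whence $\mbar(X_n) > \lambda$ and therefore $L \ge \lambda$. Letting $\lambda \uparrow \mbar(F)$ gives $L \ge \mbar(F)$, and taking the supremum over all finite $F$ together with Theorem~\ref{sphere50} gives $L \ge \mbar(X)$. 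Combined with $L \le \mbar(X)$ from the first paragraph, this yields $\mbar(X_n) \uparrow \mbar(X)$, as required.
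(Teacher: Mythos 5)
Your proof is correct, but it takes a genuinely different route from the paper's in the main case. The paper splits into two cases. When $\mbar(X) < \infty$ it invokes the existence of a maximal (hence $d$-invariant) measure $\mu_n \in \Mone(X_n)$ on each finite piece, derives the identity $\|\mu_n - \mu_m\|^2 = \mbar(X_n) - \mbar(X_m)$ in the semi-inner product space, concludes that $(\mu_n)$ is Cauchy and that $d_{\mu_n}$ converges to a constant function, and then applies the theory of $d$-invariant sequences from~\cite{NW2} to identify the limit with $\mbar(X)$; only in the case $\mbar(X) = \infty$ does it argue, as you do throughout, by approximating the atoms of a finitely supported measure using points of the dense sequence (there via \mbox{weak-$*$} continuity of~$I$ rather than your explicit bound). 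Your approach treats both cases uniformly: you reduce everything to Theorem~\ref{sphere50} and a quantitative perturbation estimate $|I(\mu') - \Imu| \le 2\delta W^2$, fixing the measure first so that its total variation $W$ is a harmless constant --- which is indeed the one point that needs care, and you handle it correctly. What your argument buys is brevity, elementarity (no maximal measures, no completeness or invariant-sequence machinery, not even the quasihypermetric hypothesis beyond what Theorem~\ref{sphere50} needs), and a single uniform case analysis. What the paper's heavier argument buys is a by-product that is reused later: the Cauchy sequence $(\mu_n)$ of maximal measures constructed in its proof is exactly the ingredient needed in the proof of Theorem~\ref{newsphere100}, so the extra structure is not wasted. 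Note also that both your proof and the paper's rely on Theorem~\ref{sphere50}, whose nontrivial direction ($\mbar(X) \le \sup_F \mbar(F)$) is itself proved by an approximation argument close in spirit to your perturbation estimate; there is no circularity, since the paper establishes that result independently beforehand.
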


\begin{proof}
The values $\mbar(X_n)$ are obviously non-decreasing,
so that convergence of $\mbar(X_n)$ to $\mbar(X)$ is all
that we need to prove.
Suppose first that $\mbar(X) < \infty$,
so that also $\mbar(X_n) < \infty$ for all~$n$.
Applying Theorems \ref{theoremDcorollary} and~\ref{2.2} of~\cite{NW2}
to~$X_n$ for each $n \geq 2$, we obtain a measure $\mu_n \in \Mone(X_n)$
such that $d_{\mu_n}(x_i) = \mbar(X_n)$
for all $i$ such that $1 \leq i \leq n$.

If $n$ and $m$ are integers with $n > m$, then we have
\begin{eqnarray*}
\| \mu_n - \mu_m\|^2
 & = &
2 I(\mu_n, \mu_m) - I (\mu_n)- I(\mu_m) \\
 & = &
2 \mu_m (d_{\mu_n})-\mu_n (d_{\mu_n})- \mu_m (d_{\mu_m}) \\
 & = &
2 \mbar (X_n)- \mbar (X_n)- \mbar(X_m) \\
 & = &
\mbar(X_n) - \mbar(X_m).
\end{eqnarray*}
Therefore, $\mbar(X_m) \leq \mbar(X_n) = I(\mu_n) \leq
\mbar(X) < \infty$ whenever $n >m$, and so there exists
$\beta \in \R$ such that $\mbar(X_n) \uparrow \beta$ as
$n \to \infty$. Hence $\| \mu_n - \mu_m \| \to 0$ as $n, m \to \infty$.
By part~(5) of Theorem~\nwrefA{2.10}
we conclude that $d_{\mu_n}$ is a Cauchy sequence in $C(X)$,
and hence that there exists $f \in C(X)$ such
that $d_{\mu_n} \to f$ in $C(X)$ as $n \to \infty$.

Now fix $k \geq 1$, and let $n \geq \max (k, 2)$.
Since $d_{\mu_n}(x_k) = \mbar (X_n)$, we have $d_{\mu_n}(x_k) \to \beta$
as $n \to \infty$, and hence $f(x_k) = \beta$ for all $k \geq 1$.
Since $x_n$ is a dense sequence in~$X$ and $f$ is continuous on~$X$,
we have $f(x) = \beta$ for all $x \in X$, and hence
$d_{\mu_n} \to \beta \cdot \underline{1}$ in $C(X)$.
Thus we have shown that $\mu_n$ is a $d$-invariant sequence
with value~$\beta$ and that $I(\mu_n) \uparrow \beta$.
An application of Theorem~\nwrefB{theoremD} now gives $\mbar(X) = \beta$,
as required.

Now suppose that $\mbar(X) = \infty$.
If $\mbar(X_{n_0}) = \infty$ for any~$n_0$,
then clearly $\mbar(X_n) = \infty$ for all $n \geq n_0$,
and there is nothing to prove, so suppose that
$\mbar(X_n) < \infty$ for all~$n$. Fix $K > 0$.
By Theorem~\ref{sphere50}, there is a finite subset
$Y = \{y_1, \ldots, y_m\}$ of~$X$ such that $\mbar(Y) > K$,
and hence a measure $\mu \in \Mone(Y)$ such that $I(\mu) > K$.
Write $\mu = \sum_{i=1}^m w_i \delta_{y_i}$ for suitable
$w_1, \ldots, w_m \in \R$.
For each~$i$, pick a sequence $x_{n_{i,k}}$ with members chosen
from the dense sequence $(x_n)_{n\geq1}$ such that
$x_{n_{i,k}} \to y_i$ as $k \to \infty$.
Then, setting $\mu_k = \sum_{i=1}^m w_i \delta_{x_{n_{i,k}}}$
for each~$k$, we clearly have $\mu_k \to \mu$ \wstar{} in $\Mone(X)$
as $k \to \infty$. Hence, by Theorem~\nwrefA{contoncompact}
(or Corollary~\nwrefA{cont2}), we have $I(\mu_k) \to I(\mu)$
as $k \to \infty$. It follows that for sufficiently large~$N$
there exists $\nu \in \Mone(X_N)$ such that $I(\nu) > K$.
Therefore, $\mbar(X_n) \to \infty$, as required.
\end{proof}

\begin{theorem}
\label{newsphere100}
Let $\Xd$ be a compact metric space with $\mbar(X) < \infty$.
Let $i \colon X \to H$ be an S-embedding of~$X$ into a Hilbert space~$H$.
Then $i(X)$ lies on a sphere in~$H$ of radius~$r$, where $\mbar(X) = 2r^2$.
\end{theorem}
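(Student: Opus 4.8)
The plan is to reduce to the finite case already established and then to control the limit carefully. Fix an S-embedding $i \colon X \to H$, so that $\|i(x) - i(y)\|^2 = d(x,y)$ for all $x, y \in X$; note that this forces $i$ to be continuous. Choose a dense sequence $(x_n)$ in $X$ (possible by compactness) and write $X_n = \{x_1, \ldots, x_n\}$ and $y_k = i(x_k)$. For each $n$ the restriction $i|_{X_n}$ is an S-embedding of the finite space $X_n$ into $H$, and since $\mbar(X_n) \leq \mbar(X) < \infty$, Theorem~\ref{sphere100} shows that $i(X_n)$ lies on a sphere in~$H$. The first point to record is that the affine hull $A_n$ of $i(X_n)$ has the minimal possible dimension for an S-embedding of~$X_n$: the Gram matrix of the vectors $y_k - y_1$ is determined by the distances $d(x_j, x_k)$ alone, so its rank---hence $\dim A_n$---is an invariant of~$X_n$. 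Consequently $i|_{X_n}$, viewed as an embedding into~$A_n$, is of minimal dimension, and part~(3) of Theorem~\ref{sphere200} gives the exact radius: $i(X_n)$ lies on a sphere of radius~$r_n$ with $\mbar(X_n) = 2 r_n^2$.

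Next I would pin down the centres. Let $c_n$ denote the unique point of~$A_n$ equidistant from $y_1, \ldots, y_n$; this exists because projecting any centre supplied by Theorem~\ref{sphere100} orthogonally onto~$A_n$ preserves equidistance, and it is unique because the differences $y_k - y_1$ span the linear part of~$A_n$. The crucial observation is a Pythagorean identity linking the centres. For $m > n$ we have $A_n \subseteq A_m$ and $i(X_n) \subseteq i(X_m)$, so $c_m$ is equidistant from $y_1, \ldots, y_n$; hence $c_m - c_n$ is orthogonal to the linear part of~$A_n$, and since $y_1 \in A_n$,
\[
r_m^2 = \|c_m - y_1\|^2 = \|c_m - c_n\|^2 + \|c_n - y_1\|^2 = \|c_m - c_n\|^2 + r_n^2,
\]
so that $\|c_m - c_n\|^2 = r_m^2 - r_n^2 = \tfrac{1}{2}\bigl(\mbar(X_m) - \mbar(X_n)\bigr)$.

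Now I would invoke Theorem~\ref{2.28}, which gives $\mbar(X_n) \uparrow \mbar(X)$, so that $(\mbar(X_n))$ is a Cauchy sequence and the identity above shows that $(c_n)$ is Cauchy in~$H$. By completeness $c_n \to c$ for some $c \in H$, and $r_n \to r$ where $r = \bigl(\tfrac{1}{2}\mbar(X)\bigr)^{1/2}$. For each fixed~$k$ we have $\|y_k - c_n\| = r_n$ for all $n \geq k$; letting $n \to \infty$ gives $\|i(x_k) - c\| = r$ for every~$k$. Finally, since $(x_k)$ is dense and $i$ is continuous, every $x \in X$ is a limit of points $x_k$ with $i(x_k) \to i(x)$, whence $\|i(x) - c\| = r$. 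Thus $i(X)$ lies on the sphere of radius~$r$ centred at~$c$, with $\mbar(X) = 2r^2$, as required.

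The main obstacle is the convergence of the centres $c_n$: a priori the spheres through the finite subsets could wander off in the infinite-dimensional space~$H$, and the radii alone carry no positional information. The Pythagorean identity $\|c_m - c_n\|^2 = r_m^2 - r_n^2$ is exactly what tames this, converting the (monotone, hence Cauchy) convergence of the radii supplied by Theorem~\ref{2.28} directly into Cauchyness of the centres. Establishing that identity---which in turn rests on the minimal-dimension fact allowing the application of part~(3) of Theorem~\ref{sphere200}, and on the nesting $A_n \subseteq A_m$---is the technical heart of the argument.
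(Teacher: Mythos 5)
Your proof is correct, and its skeleton coincides with the paper's: both arguments fix a dense sequence, attach to each finite truncation $X_n$ the centre of its circumscribed sphere, show the centres form a Cauchy sequence because $\|c_m - c_n\|^2 = r_m^2 - r_n^2 = \tfrac{1}{2}\bigl(\mbar(X_m)-\mbar(X_n)\bigr)$, and then pass to the limit using Theorem~\ref{2.28}. Where you differ is in how the centres and the Cauchy estimate are produced. The paper takes the maximal ($d$-invariant) measures $\mu_n \in \Mone(X_n)$ supplied by the proof of Theorem~\ref{2.28}, pushes them forward to barycentres $z_{\mu_n} = \sum_k \beta_k\, i(x_k)$, and derives the two identities $\|z_\mu\|^2 = \tfrac12\|\mu\|^2$ for mass-zero $\mu$ and $\|z_\mu - i(x)\|^2 = d_\mu(x) - \tfrac12 I(\mu)$ for mass-one $\mu$; the Cauchy estimate is then the measure-seminorm computation $\|\mu_m-\mu_n\|^2 = \mbar(X_m)-\mbar(X_n)$ already carried out in Theorem~\ref{2.28}, and the final identity $\|z - i(x)\|^2 = \tfrac12\mbar(X)$ is obtained for \emph{all} $x$ at once from the uniform convergence $d_{\mu_n} \to \mbar(X)\cdot\underline{1}$. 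You instead work entirely with elementary Hilbert-space geometry: the Gram-matrix observation that the affine hull of any S-embedding of $X_n$ has metrically determined dimension (so Theorem~\ref{sphere200}(3) applies and pins down $r_n$), uniqueness of the circumcentre in the affine hull, and the orthogonal-projection/Pythagoras argument giving $\|c_m-c_n\|^2 = r_m^2 - r_n^2$; you then extend from the dense sequence to all of $X$ by continuity of $i$. Your $c_n$ is in fact the paper's $z_{\mu_n}$ (by Theorem~\ref{sphere200}(5)), so the two proofs compute the same objects, but yours buys independence from the invariant-measure machinery at the cost of the minimal-dimension and uniqueness lemmas, while the paper's reuses the measure-theoretic identities already established and avoids any discussion of affine hulls. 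One point worth making explicit in your write-up is that $X$ is quasihypermetric (immediate from Schoenberg's theorem, since an S-embedding exists, or from $\mbar(X)<\infty$), which is the standing hypothesis of Theorem~\ref{2.28}.
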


\begin{proof}
As in Theorem~\ref{2.28}, choose a dense sequence $(x_n)_{n\geq1}$
in~$X$, write $X_n = \{x_1, \ldots , x_n \}$
for each $n \in \N$, and let $\mu_n \in \Mone(X_n)$ be a maximal,
and hence $d$-invariant, measure on~$X_n$.
We may assume that $x_i \neq x_j$ when $i \neq j$.
For any $\mu \in [\delta_{x_1}, \delta_{x_2}, \ldots]$,
the linear span of $\{ \delta_{x_1}, \delta_{x_2}, \ldots \}$,
we have $\mu = \sum_{k=1}^n \beta_k \delta_{x_k}$ for suitable $n \in \N$
and $\beta_1, \ldots, \beta_n \in \R$, and we
define $z_\mu \in H$ by $z_\mu = \sum_{k=1}^n \beta_k i(x_k)$.

Note that if $\mu(X) = 0$, then we have $\sum_k \beta_k = 0$,
from which it follows that
\begin{eqnarray*}
\| \mu \|^2
 & = & -I(\mu) \\[1mm]
 & = & -\sum_{k=1}^n \sum_{\ell=1}^n \beta_k \beta_\ell d(x_k, x_\ell) \\[1mm]
 & = & -\sum_{k=1}^n \sum_{\ell=1}^n
        \beta_k \beta_\ell \bigl\| i(x_k) - i(x_\ell) \bigr\|^2 \\[1mm]
 & = & 2 \| z_\mu \|^2,
\end{eqnarray*}
giving $\| z_\mu \|^2 = \frac{1}{2} \|\mu\|^2$.
Also note that if $\mu(X) = 1$, then another straightforward
calculation gives
$\| z_\mu - i(x) \|^2 = d_\mu(x) - \frac{1}{2} I(\mu)$ for all $x \in X$.

Now $(\mu_m - \mu_n)(X) = 0$ for all $m$ and~$n$,
so we can apply the first observation above, obtaining
$\| z_{\mu_m} - z_{\mu_n} \|^2 = \frac{1}{2} \| \mu_m - \mu_n \|^2$
for all $m$ and~$n$. Also, by the proof of Theorem~\ref{2.28},
the measures~$\mu_n$ form a $d$-invariant sequence in~$X$,
and so we have $\| z_{\mu_m} - z_{\mu_n} \| \to 0$
as $m, n \to \infty$. Hence, as $H$ is complete,
there exists $z \in H$ such that $z_{\mu_n} \to z$ as $n \to \infty$.
Since $\mu_n(X) = 1$ for each $n \in \N$,
we can apply the second observation above, obtaining
$\| z_{\mu_n} - i(x) \|^2 = d_{\mu_n}(x) - \frac{1}{2} \mbar(X_n)$
for all $n \in \N$ and $x \in X$. Finally, taking limits and using 
Theorem~\ref{2.28}, we have $\| z - i(x) \|^2 = \frac{1}{2} \mbar(X)$
for all $x \in X$, and the result follows.
\end{proof}

We conclude our discussion of embeddings by showing that the first part of
Theorem~\ref{sphere400} generalizes in a natural way to the general compact case.

\begin{theorem}
\label{sphere500}
Let $\Xd$ be a compact quasihypermetric space.
Then the following conditions are equivalent.
\begin{enumerate}
\item
$\mbar(X) < \infty$.
\item
There exists a $d$-invariant sequence in $\Mone(X)$.
\item
Some S-embedding of~$X$ in the Hilbert space~$\ell^2$ lies on a sphere. 
\item
Every S-embedding of~$X$ in the Hilbert space~$\ell^2$ lies on a sphere. 
\end{enumerate}
\end{theorem}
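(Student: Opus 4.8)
The plan is to prove the equivalence by establishing the cycle $(1) \Rightarrow (4) \Rightarrow (3) \Rightarrow (1)$ together with the separate equivalence $(1) \Leftrightarrow (2)$. Almost all of the work has already been done in the preceding results, so the argument is chiefly a matter of assembling them in the right order, in close analogy with the finite case treated in Theorem~\ref{sphere400}(1). Here Theorem~\ref{newsphere100} plays the role that Theorem~\ref{sphere100} played there, while Schoenberg's theorem again guarantees that S-embeddings into the ambient Hilbert space exist.

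For $(1) \Rightarrow (4)$ I would simply invoke Theorem~\ref{newsphere100}: if $\mbar(X) < \infty$, then the image of \emph{any} S-embedding of~$X$ into a Hilbert space, and in particular into~$\ell^2$, lies on a sphere. For $(4) \Rightarrow (3)$ the only thing to check is that the class of S-embeddings of~$X$ into~$\ell^2$ is non-empty, so that the universally quantified statement~(4) actually produces a concrete embedding; this is exactly Schoenberg's theorem~\cite{Sch3}, which applies since a compact metric space is separable and~$X$ is assumed quasihypermetric. For $(3) \Rightarrow (1)$ I would descend to finite subsets: if $i \colon X \to \ell^2$ is an S-embedding whose image lies on a sphere of radius~$r$, then for every finite $F \subseteq X$ the restriction $i|_F$ S-embeds~$F$ on a sphere of radius at most~$r$ inside the (finite-dimensional, hence euclidean) affine span of $i(F)$, whence $\mbar(F) \le 2r^2$ by part~(1) of Theorem~\ref{sphere200}. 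Taking the supremum over all finite~$F$ and applying Theorem~\ref{sphere50} then yields $\mbar(X) \le 2r^2 < \infty$.

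For the remaining equivalence $(1) \Leftrightarrow (2)$ I would lean on the machinery of~\cite{NW2}. The implication $(1) \Rightarrow (2)$ is already contained in the proof of Theorem~\ref{2.28}: when $\mbar(X) < \infty$, choosing a dense sequence and maximal measures $\mu_n \in \Mone(X_n) \subseteq \Mone(X)$ produces precisely a $d$-invariant sequence in $\Mone(X)$, as was established there. The reverse implication $(2) \Rightarrow (1)$ is Theorem~\nwrefB{theoremD}, the result linking the existence of a $d$-invariant sequence with the finiteness of~$\mbar$ (and already used in exactly this way at the end of the proof of Theorem~\ref{2.28} to conclude $\mbar(X) = \beta$).

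I expect the only genuinely substantive step to be $(3) \Rightarrow (1)$, as every other implication is a direct citation. The delicate point is the passage from a single spherical embedding of the whole, possibly infinite, space~$X$ to a bound on~$\mbar(X)$: the sphere-radius estimate of Theorem~\ref{sphere200} is available only in the finite/euclidean setting, so one must first restrict to finite subsets, observe that each finite image still lies on a sphere of no larger radius in its euclidean affine span, and only then reassemble via Theorem~\ref{sphere50}. A direct computation showing $I(\mu) = 2r^2 - 2\bigl\| \sum_i w_i (i(x_i) - c) \bigr\|^2 \le 2r^2$ for finitely supported $\mu = \sum_i w_i \delta_{x_i} \in \Mone(X)$, where~$c$ is the centre of the sphere, gives an alternative self-contained route to the same bound.
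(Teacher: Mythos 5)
Your proposal is correct and follows essentially the same route as the paper: $(1)\Rightarrow(4)$ via Theorem~\ref{newsphere100}, $(4)\Rightarrow(3)$ via Schoenberg's embedding theorem, $(3)\Rightarrow(1)$ by restricting to finite subsets and combining Theorem~\ref{sphere200} with Theorem~\ref{sphere50}, and $(1)\Leftrightarrow(2)$ by appeal to the invariant-sequence machinery of the second paper (the paper cites Corollary~\nwrefB{theoremDnew} directly where you reconstruct the equivalence from the proof of Theorem~\ref{2.28} and Theorem~\nwrefB{theoremD}, which amounts to the same thing).
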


\begin{proof}
The equivalence of (1) and~(2) is given by Corollary~\nwrefB{theoremDnew},
the fact that (1) implies~(4) is given by Theorem~\ref{newsphere100},
and the fact that (4) implies~(3) is given by an application of Schoenberg's
result as in the proof of Theorem~\ref{sphere400}.

Suppose that $X$ can be S-embedded on a sphere
of radius~$r$ in~$\ell^2$. Clearly every finite subset~$F$
of~$X$ can then be S-embedded on a sphere of radius at most~$r$ in
a suitable euclidean space, and hence satisfies $\mbar(F) \leq 2r^2$,
by Theorem~\ref{sphere200}. It is now immediate by Theorem~\ref{sphere50}
that $\mbar(X) < \infty$. Thus, (3) implies~(1), completing the proof.
\end{proof}

\section{$\mbar(X)$ in Finite Spaces}
\label{finitespaces}

In this paper and the earlier paper~\cite{NW2}
we have derived several results about the constant~$\mbar(X)$
in a finite metric space~$X$, and have introduced a number of finite
metric spaces or classes of such spaces as examples and counterexamples.
The examples have typically been constructed so as to have the minimum
number of elements consistent with the phenomenon under discussion.

Our main general result on finite spaces in the present paper so far
has been Theorem~\ref{sphere400} above, and Theorem~\nwrefB{2.13} was
the main such result in the earlier paper. We reproduce the latter
result here for convenience.

\begin{theorem}[= Theorem~\nwrefB{2.13}]
\label{2.13new}
Let $\Xd$ be a finite quasihypermetric space. Then we have the following.
\begin{enumerate}
\item[(1)]
If $X$ is strictly quasihypermetric, then $\mbar(X) < \infty$.
\item[(2)]
If $X$ is not strictly quasihypermetric,
then $\mbar(X) < \infty$ if and only if there exists no
$d$-invariant measure $\mu \in \Mzero(X)$ with value $c \neq 0$.
\end{enumerate}
\end{theorem}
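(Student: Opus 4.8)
The plan is to reduce the whole question to a finite-dimensional quadratic optimization. Write $X = \{x_1,\ldots,x_n\}$ and identify a measure $\mu = \sum_i w_i \delta_{x_i}$ with the vector $w = (w_1,\ldots,w_n) \in \R^n$; let $D = \bigl(d(x_i,x_j)\bigr)_{i,j}$ be the distance matrix and $\mathbf{1} = (1,\ldots,1)$. Then $I(\mu) = w^\top D w$, the subspace $\MzeroX$ corresponds to the hyperplane $T = \{w : \mathbf{1}^\top w = 0\}$ and $\Mone(X)$ to the affine hyperplane $\{w : \mathbf{1}^\top w = 1\}$, so $\mbar(X) = \sup\{w^\top D w : \mathbf{1}^\top w = 1\}$. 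The quasihypermetric hypothesis says exactly that $Q(w) := w^\top D w$ is negative semidefinite on $T$, and strict quasihypermetricity that it is negative definite there. Since $d_\mu$ corresponds to $Dw$, a mass-zero measure $t \in T$ is $d$-invariant with value $c$ precisely when $Dt = c\mathbf{1}$.

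First I would pin down the radical. Set $N = \{t \in T : Q(t) = 0\}$. Because $Q$ is negative semidefinite on $T$, the semidefinite Cauchy--Schwarz inequality forces $t^\top D s = 0$ for all $s \in T$ whenever $t \in N$, i.e.\ $Dt \in T^\perp = \operatorname{span}(\mathbf{1})$; conversely $Dt = c\mathbf{1}$ with $t\in T$ gives $Q(t) = c\,\mathbf{1}^\top t = 0$. Hence $N$ \emph{is} exactly the set of $d$-invariant mass-zero measures, and for $t \in N$ the scalar $c_t$ with $Dt = c_t\mathbf{1}$ is its value and depends linearly on $t$. Moreover $Q$ is negative definite on any complement $N^\perp$ of $N$ within $T$.

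Next, fixing $w_0$ with $\mathbf{1}^\top w_0 = 1$ and writing a general competitor as $w_0 + t$ with $t \in T$, I would expand
\[
Q(w_0 + t) = Q(w_0) + 2(Dw_0)^\top t + Q(t).
\]
Decomposing $t = t_N + t_\perp$ with $t_N \in N$ and $t_\perp \in N^\perp$, the radical identity $t_N^\top D s = 0$ ($s \in T$) gives $Q(t) = Q(t_\perp)$, while $Dt_N = c_{t_N}\mathbf{1}$ together with $\mathbf{1}^\top w_0 = 1$ gives $(Dw_0)^\top t_N = c_{t_N}$. The crux is the finiteness criterion for the supremum: along $N^\perp$ the term $2(Dw_0)^\top t_\perp + Q(t_\perp)$ is a downward parabola (as $Q$ is negative definite there), contributing a bounded supremum, whereas along $N$ the function reduces to the affine map $t_N \mapsto Q(w_0) + 2c_{t_N}$. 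Thus $\mbar(X) = \sup_{t\in T} Q(w_0+t)$ is finite if and only if the linear functional $t_N \mapsto c_{t_N}$ vanishes on $N$, that is, if and only if every $d$-invariant mass-zero measure has value $0$.

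This yields both assertions at once. If $X$ is strictly quasihypermetric then $N = \{0\}$, the criterion holds vacuously, and $\mbar(X) < \infty$, giving~(1); in general the criterion is precisely the non-existence of a $d$-invariant $\mu \in \MzeroX$ with value $c \neq 0$, giving~(2). I expect the main obstacle to be the careful justification of this finiteness criterion---checking that unboundedness genuinely occurs as soon as $c_{t_N} \neq 0$ for some $t_N \in N$, and that the bound is genuine (not merely line-by-line) once $c_{t_N}\equiv 0$---which is exactly where the splitting $T = N \oplus N^\perp$ and the negative definiteness of $Q$ on $N^\perp$ are needed. An equivalent route, closer to this paper's embedding theme, replaces $D$ by a Schoenberg embedding $x_i \mapsto y_i$: then $I(\mu) = 2\sum_i w_i\|y_i\|^2 - 2\|\sum_i w_i y_i\|^2$ for $\mathbf 1^\top w = 1$, the radical $N$ becomes the space of affine dependencies of the $y_i$, and the condition $c_{t_N}\equiv 0$ is exactly the requirement (cf.\ Theorem~\ref{sphere100}) that $\|y_i\|^2$ agree with an affine function on $\operatorname{aff}\{y_i\}$, i.e.\ that the $y_i$ lie on a sphere.
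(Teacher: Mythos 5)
Your argument is correct. Note first that this paper does not actually prove the statement: Theorem~\ref{2.13new} is reproduced verbatim from Theorem~2.13 of~\cite{NW2}, where the proof rests on the general machinery of $d$-invariant and maximal measures developed there; so there is no in-paper proof to match line by line. Your route is a self-contained finite-dimensional one, and every step checks out: the identification of measures with vectors turns $d$-invariance with value $c$ into $Dt=c\mathbf{1}$; the semidefinite Cauchy--Schwarz inequality correctly identifies the null set $N=\{t\in T: Q(t)=0\}$ with the radical of $Q|_T$, hence with the space of $d$-invariant mass-zero measures (and $t\mapsto c_t=\tfrac1n\mathbf{1}^{\top}Dt$ is visibly linear); the cross terms in the splitting $t=t_N+t_\perp$ vanish by the radical property, so the supremum genuinely decouples into an affine part $2c_{t_N}$ on $N$ (unbounded above unless $c_{\cdot}\equiv 0$, bounded by $0$ otherwise) and a strictly concave quadratic on $N^{\perp}$ whose supremum $b^{\top}A^{-1}b$ is finite since $-Q$ is positive definite there. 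This yields exactly the dichotomy of parts (1) and~(2). Compared with the companion paper's measure-theoretic treatment, your proof buys elementarity and an explicit formula for $\mbar(X)$ in the finite case (it essentially contains Theorem~\ref{alg1} as a by-product), at the cost of not generalizing beyond finite $X$; your closing remark correctly ties the criterion $c_{t_N}\equiv 0$ to the sphere condition of Theorem~\ref{sphere100}, which is indeed the geometric face of the same computation.
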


In this section, we develop further results about finite spaces,
and in particular settle some of the minimality questions
raised by our examples.

When the space $X$ is finite, the question of the finiteness of $\mbar(X)$
can be resolved by a straightforward algebraic test,
according to the next result, which also gives an algorithm
for the computation of $\mbar(X)$ when it is finite.
We note that Alexander and Stolarsky~\cite[Theorem~3.3]{AandS}
give a simple algorithm involving the solution of a system
of linear equations for the computation of $\mbar(X)$
when $X$ is a (strictly quasihypermetric) finite subset of euclidean space.

\begin{theorem}
\label{alg1}
Let $(X = \{x_1, \ldots, x_n\}, d)$ with $n \geq 2$
be a finite quasihypermetric space.
Consider the linear system $D w = \mathbf{1}$, where $D$ is the
distance matrix $\bigl( d(x_i, x_j) \bigr)_{i,j=1}^n$
and $\mathbf{1}$ is the vector $(1, \ldots, 1)^T$ of length~$n$.
Then a solution $w = (w_1, \ldots, w_n)^T$ to the system exists.
Further,
\begin{enumerate}
\setlength{\itemsep}{1mm}
\item[(1)]
if\/ $\sum_{i=1}^n w_i = 0$, then $\mbar(X) = \infty$, and
\item[(2)]
if\/ $\sum_{i=1}^n w_i = w_0 \neq 0$, then $\mbar(X) = 1/w_0 < \infty$,
and
\[
(1/w_0) \sum_{i=1}^n w_i \delta_{x_i} \in \Mone(X)
\]
is a maximal measure on~$X$.
\end{enumerate}
\end{theorem}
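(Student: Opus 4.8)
The plan is to recast the problem in terms of the symmetric distance matrix $D$ and to read the quasihypermetric hypothesis as a negative-semidefiniteness statement. For a measure $\mu = \sum_{i=1}^n c_i \delta_{x_i}$ with coefficient vector $c = (c_1, \ldots, c_n)^T$, one has $\Imu = c^T D c$, the total mass of $\mu$ equals $\mathbf{1}^T c$, and $d_\mu(x_i) = (Dc)_i$; thus $\mu$ is $d$-invariant with value $\lambda$ exactly when $Dc = \lambda \mathbf{1}$. The quasihypermetric property of $X$ is precisely the assertion that $v^T D v \leq 0$ for every $v \in \R^n$ with $\mathbf{1}^T v = 0$, i.e.\ that $D$ is negative semidefinite on the hyperplane $\mathbf{1}^\perp$.

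First I would prove that $Dw = \mathbf{1}$ is solvable. As $D$ is symmetric, $\im D = (\ker D)^\perp$, so it is enough to check that $\mathbf{1} \perp \ker D$, that is, $\sum_i v_i = 0$ for each $v \in \ker D$. Given such a $v$, write $v = u + c\,\mathbf{1}$ with $u \in \mathbf{1}^\perp$ and $c = (\mathbf{1}^T v)/n$. Since $Dv = 0$ we have $Du = -c\,D\mathbf{1}$, and hence $\mathbf{1}^T D u = -c\,\mathbf{1}^T D \mathbf{1}$; expanding $0 = v^T D v$ then gives $u^T D u = c^2\,\mathbf{1}^T D \mathbf{1}$. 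Here $u^T D u \leq 0$ by the quasihypermetric property, while $\mathbf{1}^T D \mathbf{1} = \sum_{i,j=1}^n d(x_i, x_j) > 0$ because $n \geq 2$ and the points $x_i$ are distinct. Hence $c = 0$ and $\mathbf{1}^T v = nc = 0$, as required. I expect this to be the main obstacle, since it is the single place where the quasihypermetric hypothesis is genuinely used, and the idea of pairing an arbitrary kernel vector against $\mathbf{1}$ to force its $\mathbf{1}$-component to vanish is the key point.

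With a solution $w$ fixed, set $\mu = \sum_{i=1}^n w_i \delta_{x_i}$ and $w_0 = \sum_{i=1}^n w_i$. Because $Dw = \mathbf{1}$, we have $d_\mu(x_i) = 1$ for all $i$, so $\mu$ is $d$-invariant with value $1$ and total mass $w_0$; note $w \neq 0$ since $Dw = \mathbf{1} \neq 0$. For part~(1), if $w_0 = 0$ then $\mu \in \MzeroX$ is a nonzero $d$-invariant measure of mass zero and value $1 \neq 0$; in particular $X$ is not strictly quasihypermetric, so Theorem~\ref{2.13new}(2) yields $\mbar(X) = \infty$. (One can also argue directly: taking $\nu_0 = \delta_{x_1} \in \MoneX$, a computation gives $\Imu = 0$ and $I(\mu, \nu_0) = d_\mu(x_1) = 1$, whence $I(\nu_0 + \lambda \mu) = 2\lambda \to \infty$ along $\MoneX$.)

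For part~(2), if $w_0 \neq 0$ put $\nu = (1/w_0)\,\mu \in \MoneX$. Then $d_\nu \equiv 1/w_0$, so $\nu$ is a $d$-invariant measure of mass $1$, and by Theorem~\nwrefB{2.2} such measures coincide with maximal measures in a compact quasihypermetric space; thus $\nu$ is maximal and $\mbar(X) = I(\nu)$. Finally $I(\nu) = \int_X d_\nu \, d\nu = (1/w_0)\,\nu(X) = 1/w_0$, so that $\mbar(X) = 1/w_0 < \infty$ and $\nu = (1/w_0)\sum_{i=1}^n w_i \delta_{x_i}$ is the asserted maximal measure, completing the plan.
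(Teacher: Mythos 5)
Your proposal is correct, and for the existence of a solution to $Dw = \mathbf{1}$ it takes a genuinely different and more self-contained route than the paper. The paper establishes solvability by splitting into the cases $\mbar(X) = \infty$ and $\mbar(X) < \infty$ and invoking the invariant-measure machinery of the second paper in the series (Theorems \nwrefB{theoremDcorollary}, \nwrefB{2.2} and Theorem~\ref{2.13new}) to produce, in each case, a $d$-invariant measure whose weight vector can be rescaled to solve the system. You instead argue purely linear-algebraically: since $D$ is symmetric, $\im D = (\ker D)^\perp$, so it suffices to show $\mathbf{1} \perp \ker D$; decomposing $v \in \ker D$ as $u + c\,\mathbf{1}$ with $u \in \mathbf{1}^\perp$ and expanding $0 = v^T D v$ yields $u^T D u = c^2\, \mathbf{1}^T D \mathbf{1}$, and the quasihypermetric inequality ($u^T D u \leq 0$) together with $\mathbf{1}^T D \mathbf{1} = \sum_{i,j} d(x_i,x_j) > 0$ forces $c = 0$. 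This computation is correct and isolates exactly where the quasihypermetric hypothesis enters; as a bonus it makes transparent the fact, recorded separately in Remark~\ref{algremark1}, that $\sum_i w_i$ is the same for every solution $w$ (any two solutions differ by a kernel vector, which you have shown has coordinate sum zero). For conclusions (1) and (2) you then argue essentially as the paper does, identifying the rescaled measure as $d$-invariant and citing Theorem~\ref{2.13new} (or your direct computation $I(\nu_0 + \lambda\mu) = 2\lambda$, which is a nice elementary substitute) for the infinite case and Theorem~\nwrefB{2.2} for the finite case; the paper's citation of Theorem~\nwrefB{2.9.5new} for statement (1) plays the same role as your appeal to Theorem~\ref{2.13new}(2). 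What your approach buys is independence from the existence theorems for maximal and invariant measures; what the paper's approach buys is brevity, given that those theorems are already in hand.
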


\begin{proof}
If $\mbar(X) = \infty$, part~(1) of Theorem~\ref{2.13new}
implies that $X$ is not strictly quasihypermetric,
and then part~(2) of Theorem~\ref{2.13new}
implies that there exists an invariant measure $\mu \in \Mzero(X)$
with some non-zero value~$c$.
We therefore clearly have a solution~$w$ to the linear system.
If $\mbar(X) < \infty$, then there exists an invariant measure
$\mu \in \Mone(X)$, by Theorem~\nwrefB{theoremDcorollary}.
This measure has value $\mbar(X)$ by Theorem~\nwrefB{2.2},
and since $n \geq 2$ we have $\mbar(X) > 0$.
We therefore again have a solution to the linear system.
Statement~(1) now follows from Theorem~\nwrefB{2.9.5new},
and statement~(2) from Theorem~\nwrefB{2.2}.
\end{proof}

\begin{remark}
\label{algremark1}
Implicit in the statement and proof of the last theorem is the fact
that $\sum_{i=1}^n w_i$ has the same value
for every solution~$w$ to the system $D w = \mathbf{1}$.
Also, since this system always has a solution,
the matrix~$D$ is non-singular if and only if the system
has exactly one solution. 
\end{remark}

\begin{remark}
\label{algremark2}
If $X$ in the theorem is strictly quasihypermetric,
then the distance matrix~$D$ is in fact non-singular.
Indeed, $D$ is the natural matrix representation
of the operator $T \colon \MX \to C(X)$ defined by $T(\mu) = d_\mu$
for $\mu\in \MX$, which is discussed and used extensively in~\cite{NW1}.
If $X$ is strictly quasihypermetric then Theorem~\nwrefA{2.19} shows that
$T$ is an injection (and hence, since $X$ is finite, a bijection),
and so $D$ is non-singular.

Moreover, Theorem~\nwrefA{2.20} implies that if $\mbar(X) < \infty$,
then $D$ is non-singular only if $X$ is strictly quasihypermetric.
Example~\nwrefB{4ptexample} provides an example
of a $4$-point space~$X$ which is quasihypermetric but not strictly quasihypermetric
and satisfies $\mbar(X) < \infty$,
and for which $D$ is therefore singular.
\end{remark}

\begin{remark}
\label{algremark3}
A direct calculation using the space presented in Theorem~\nwrefB{2.9}
shows that when $X$ is quasihypermetric but not strictly quasihypermetric
and has $\mbar(X) = \infty$,
it is possible to have the corresponding distance matrix~$D$ non-singular.
We present an example to show that $D$ may also be
singular under the same assumptions.

Let $X = \{x_1, x_2, x_3\}$ have the metric~$d_1$ with respect to which
all non-zero distances equal~$6$.
Then $X$ is clearly (strictly) quasihypermetric. Also, the measure
$\mu_1 = \frac{1}{3}(\delta_{x_1} + \delta_{x_2} + \delta_{x_3})$
is invariant on~$X$, and we have $\mbar(X) = 4$.
Let $Y = \{y_1, y_2, y_3, y_4\}$, where $y_1, y_2, y_3, y_4$
are equally spaced points placed consecutively around a circle
of radius~$\frac{4}{\pi}$, and give~$Y$ the arc-length metric~$d_2$.
Using the proof of Corollary~\nwrefB{2.4} and Example~\nwrefB{4ptexample},
we find that $Y$ is quasihypermetric but not strictly quasihypermetric,
that $\frac{1}{2}(\delta_{y_1} + \delta_{y_3})$ is invariant on~$Y$,
and that $\mbar(Y) = 2$.

Let $Z = X \cup Y$ and set $c = 3$.
Then defining $d \colon Z \times Z \to \R$ as in Theorem~\nwrefB{2.1},
we find that $Z$ is quasihypermetric, while $Z$ fails to be strictly quasihypermetric since $Y$ is not strictly quasihypermetric.
Further, by Theorem~\nwrefB{2.1.4},
the measure $-\mu_1 + \mu_2 \in \Mzero(Z)$
is invariant with value~$-1$, and it follows by Theorem~\ref{2.13new}
that $\mbar(Z) = \infty$. Finally, the distance matrix~$D$
for~$Z$ is singular, since its null space is the $1$-dimensional space
spanned by the vector $(0, 0, 0, -1, 1, -1, 1)^T$.
\end{remark}

Our next result gives a systematic account of the relationships
that must hold between the number of points in a finite space
and certain of the metric properties of the space.

First, we recall the following definition, due to Kelly~\cite{Kel2}.
Let $\Xd$ be a metric space. If for all $n \in \N$
and for all $a_1, \ldots, a_n, b_1, \ldots, b_{n+1} \in X$ we have
\[
\sum_{i=1}^n \sum_{j=1}^n d(a_i, a_j)
    + \sum_{i=1}^{n + 1} \sum_{j=1}^{n+1} d(b_i, b_j)
\leq
2 \sum_{i=1}^n \sum_{j=1}^{n+1} d(a_i, b_j),
\]
then $\Xd$ is said to be a \textit{hypermetric space}.

\begin{theorem}
\label{tabletheorem}
The following table gives the necessary relations between the
number of points in a finite space and various metric properties
of the space. \textup{(}An entry of a dash `---' should be read as
`sometimes yes and sometimes no'.\textup{)}

\vspace{2mm}
\begin{small}
\renewcommand{\arraystretch}{1.3}
\setlength{\tabcolsep}{4pt}
\begin{center}
\begin{upshape}
\begin{tabular}{|c||c|c|c|c|c|}
\hline
\parbox[t]{17mm}{\vspace*{-3mm}\center number of points in space}
 & \parbox[t]{17mm}%
   {\vspace*{-3mm}\center euclidean}
 & \parbox[t]{17mm}%
   {\vspace*{-3mm}\center hy\-per\-me\-t\-ric}
 & \parbox[t]{17mm}%
   {\vspace*{-3mm}\center quasi\-hy\-per\-me\-t\-ric}
 & \parbox[t]{17mm}%
   {\vspace*{-3mm}\center strictly quasi\-hy\-per\-me\-t\-ric\vspace*{2.5mm}}
 & \parbox[t]{17mm}%
   {\vspace*{-3mm}\center $\mbar < \infty$} \\ \hline\hline
$\llap{$\leq {}$} 3$ & yes  & yes  & yes  & yes  & yes \\[0.8mm] \hline
$4$                  & ---  & yes  & yes  & ---  & yes \\[0.8mm] \hline
$\llap{$\geq {}$} 5$ & ---  & ---  & ---  & ---  & --- \\[0.8mm] \hline
\end{tabular}
\end{upshape}
\end{center}
\end{small}
\end{theorem}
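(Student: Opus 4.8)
The plan is to exploit the implications among the five properties so that each row collapses to a handful of decisive examples. The relevant implications, all either classical or recorded earlier in the paper, are: \emph{euclidean} $\Rightarrow$ \emph{quasihypermetric} (Schoenberg: euclidean distance is of negative type), \emph{hypermetric} $\Rightarrow$ \emph{quasihypermetric}, \emph{strictly quasihypermetric} $\Rightarrow$ \emph{quasihypermetric}, and (Theorem~\ref{2.13new}(1)) \emph{strictly quasihypermetric} $\Rightarrow \mbar<\infty$. Two consequences drive everything. First, a single space carrying a strong property witnesses ``yes'' in every weaker column. Second, and more usefully, a space that fails quasihypermetricity fails all four of the other properties (each implies quasihypermetricity) and, as noted in the introduction, then has $\mbar=\infty$; thus one non\-quasihypermetric example witnesses ``no'' in \emph{every} column simultaneously. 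I will also use repeatedly that every metric on at most four points is $L^1$-embeddable (the cut cone and the metric cone coincide for at most four points), whence, by Remark~\ref{Assouad} and Theorem~\ref{2.15}, such a space is hypermetric, quasihypermetric and has $\mbar<\infty$.

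For the row $\le 3$, every such space embeds isometrically in the euclidean plane as a (possibly degenerate) triangle, so the \emph{euclidean} entry is ``yes''; hypermetricity, quasihypermetricity and $\mbar<\infty$ follow from $L^1$-embeddability as above. For strict quasihypermetricity I would apply Theorem~\ref{sphere400}(3): it suffices that the S-embedding be affinely independent, and for at most three points this follows from the \emph{strict} inequality $\sqrt{c}<\sqrt{a}+\sqrt{b}$ for the square-root metric (equality would force $c>a+b$, contradicting the triangle inequality). Hence all five entries are ``yes''.

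The row $4$ is the substantive one. Granting $L^1$-embeddability of four-point metrics, the \emph{hypermetric}, \emph{quasihypermetric} and $\mbar<\infty$ columns are ``yes'' exactly as above. The two ``---'' entries I would settle by exhibiting both behaviours: the regular tetrahedron is euclidean, while the tripod $K_{1,3}$ (a centre joined to three leaves, each at distance $1$, leaves mutually at distance $2$) is not, since its three leaves would have to be collinear through the centre yet pairwise at distance $2$; and an affinely independent euclidean tetrahedron is strictly quasihypermetric, while the four equally spaced points on a circle with the arc-length metric (Example~\nwrefB{4ptexample}) are quasihypermetric but not strictly so.

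Finally, the reduction makes every entry of the row $\ge 5$ a ``---'' once I produce, for each $n\ge 5$, one space with all properties and one with none. For the former I take the regular simplex on $n$ equidistant points (all distances $c$): it is euclidean, and the direct computations $\sum_{i,j}\alpha_i\alpha_j\,d(x_i,x_j)=-c\sum_i\alpha_i^2<0$ for mass-zero $\alpha\neq 0$ and $\sum_{i,j} b_ib_j\,d(x_i,x_j)=c(1-\sum_i b_i^2)\le 0$ for integer $b$ with $\sum_i b_i=1$ show it is strictly quasihypermetric and hypermetric, so it has every property. For the latter I take the path metric of the complete bipartite graph $K_{2,3}$ on five vertices, padded by $n-5$ further points placed at a common large distance from everything; the mass-zero vector $\alpha=(3,3,-2,-2,-2)$ gives $\sum_{i,j}\alpha_i\alpha_j\,d(x_i,x_j)=12>0$, so this space is not quasihypermetric and hence, by the opening remarks, fails all five properties. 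I expect the main obstacle to be the one genuinely non-elementary positive input, namely that every four-point metric is $L^1$-embeddable (equivalently, hypermetric); the remaining work is either an implication already available in the paper or the routine construction and verification of the explicit witnesses above.
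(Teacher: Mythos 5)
Your proposal is correct, and its skeleton is the same as the paper's: reduce via the implication chain (euclidean $\Rightarrow$ hypermetric $\Rightarrow$ quasihypermetric; euclidean $\Rightarrow$ strictly quasihypermetric $\Rightarrow$ $\mbar<\infty$; and failure of the quasihypermetric property forces failure of all five columns, since $\mbar=\infty$ then as well), take the $L^1$-embeddability of all four-point metric spaces on citation, and supply explicit witnesses for the dashes. The genuine differences lie in the witnesses, and they make your argument more self-contained and more economical. The paper's row $\geq 5$ leans on constructions imported from Part~II (Theorems \nwrefB{2.9} and~\nwrefB{2.9A}) and on Assouad's five-point space, and in fact supplies more examples than the table strictly needs; you observe that a single non-quasihypermetric five-point space yields every ``no'' in that row at once, and your choice --- the path metric of $K_{2,3}$, with $\sum_{i,j}\alpha_i\alpha_j d(x_i,x_j)=12>0$ for $\alpha=(3,3,-2,-2,-2)$, correctly computed --- together with the regular simplex on the ``yes'' side settles the whole row. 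Your remaining witnesses (the tripod as a non-euclidean four-point space, Example~\nwrefB{4ptexample} for the failure of strict quasihypermetricity on four points) are sound, as is your derivation of strict quasihypermetricity of three-point spaces from affine independence of the S-embedding via Theorem~\ref{sphere400}; the paper instead cites Bj\"orck and Schoenberg for the stronger fact that all compact subsets of euclidean space are strictly quasihypermetric. The one non-elementary input you leave unproved --- that every four-point metric space is $L^1$-embeddable, whence hypermetric and with $\mbar<\infty$ by Remark~\ref{Assouad} --- is exactly the fact the paper also takes on citation (to Wolfe), so your proof is complete at the same level of rigour as the paper's.
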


\begin{proof}
The following well established general results
(some of which we have already mentioned here or in~\cite{NW1})
deal with a number of cases immediately.
\begin{enumerate}
\item
By Theorem~3.8 of~\cite{AandS}, all compact subsets of euclidean spaces
have $\mbar$ finite.
\item
By Theorem~5.1 of~\cite {Kel1}, all euclidean spaces are hypermetric.
\item
By Theorem~2 of~\cite{Kel3}, all hypermetric spaces are quasihypermetric.
\item
By Lemma~1 of~\cite{Bjo}, all compact subsets of euclidean spaces are
strictly quasihypermetric. (The fact that finite subsets of
euclidean space are strictly quasihypermetric was proved in~\cite{Sch2}.)
\end{enumerate}

The only entries in the table now needing comment are disposed of
(with some redundancy) by the following observations.
\begin{enumerate}
\setcounter{enumi}{4}
\item
Every $4$-element metric space is $L^1$-embeddable, by~\cite{Wolfe}
(the authors are grateful to David Yost for pointing out
this fact and for locating the reference), and therefore hypermetric
(see Remark~\ref{Assouad} above).
(Blumenthal's four-point theorem \cite[Theorem~52.1]{Blu1}
shows independently that such a space is quasihypermetric.)
\item
Example~\nwrefB{4ptexample} gives a $4$-element metric space which
is not strictly quasihypermetric (but is hypermetric).
\item
We noted in~(5) that every $4$-element metric space is hypermetric,
and Remark~\ref{Assouad} outlines the argument
that the value of $\mbar$ must then be finite.
\item
Theorem~\nwrefB{2.9} constructs a $5$-element space which is
quasihypermetric but not strictly quasihypermetric and has $\mbar$ infinite.
\item
Assouad~\cite[Proposition~2]{Ass2} constructs a $5$-element metric space
which is quasihypermetric but not hypermetric
(further information is given in Example~\ref{Assouad1} below).
\item
Theorem~\nwrefB{2.9A} gives an example of a $5$-point space
which is not quasihypermetric. (The optimality of the number~$4$
in Blumenthal's four-point theorem also corresponds to the existence
of such a space.)
\end{enumerate}
\end{proof}

A natural question raised by the above results is
whether a strictly quasihypermetric metric space must be hypermetric.
We have seen in part~(5) of the proof of Theorem~\ref{tabletheorem}
that there is no $4$-point counterexample,
but we present one with $5$ points.

\begin{example}
Let $X = \{x_1, x_2\}$ and $Y = \{y_1, y_2, y_3\}$,
and give each set the discrete metric.
If we define~$Z$ as in Theorem~\nwrefB{2.1}, taking $c = \frac{5}{8}$,
then it follows that $Z$ is strictly quasihypermetric.
But taking $a_i = x_i$ for $i = 1, 2$ and $b_j = y_j$ for $j = 1, 2, 3$,
we find using the definition of Kelly above that $Z$ is not hypermetric.
\end{example}

\begin{example}
\label{Assouad1}
We show that the $5$-element space of Assouad referred to in part~(9)
of the proof of Theorem~\ref{tabletheorem} is not strictly quasihypermetric
and has $\mbar$ infinite.
The distances in this space are represented in the obvious way
by the entries of the following matrix:
\[
\left(
  \begin{array}{ccccc}
    0 & 2 & 2 & 5 & 5 \\
    2 & 0 & 4 & 3 & 3 \\
    2 & 4 & 0 & 3 & 3 \\
    5 & 3 & 3 & 0 & 4 \\
    5 & 3 & 3 & 4 & 0
  \end{array}
\right).
\]
It is easy to check that if we define a measure $\mu$ of mass~$0$ on the
space by using the matrix of respective weights
\[
\left(\!\!
  \begin{array}{r}
     2 \\
    -2 \\
    -2 \\
     1 \\
     1
  \end{array}
\right),
\]
then we have $d_\mu \equiv 2$, and the desired conclusions are given
by Theorem~\nwrefB{2.9.5new}.
\end{example}

\providecommand{\bysame}{\leavevmode\hbox to3em{\hrulefill}\thinspace}

\end{document}